\documentclass[11pt]{amsart}
\usepackage{amsmath,amssymb,amsthm}
\usepackage{mathtools}
\usepackage{esint}
\usepackage{comment}
\allowdisplaybreaks

\usepackage{xcolor}
\usepackage{hyperref}

\theoremstyle{plain}
\newtheorem{theorem}{Theorem}[section]
\newtheorem{remark}[theorem]{Remark}

\newtheorem{corollary}[theorem]{Corollary}
\newtheorem{lemma}[theorem]{Lemma}
\newtheorem{proposition}[theorem]{Proposition}

\numberwithin{equation}{section}

\newcommand{\R}{\mathbb{R}}

\renewcommand{\S}{\mathbb{S}}
\newcommand{\RP}{\mathbb{RP}}

\newcommand{\ve}{\varepsilon}
\newcommand{\E}{\mathcal{E}}
\newcommand{\wh}[1]{\widehat{#1}}
\newcommand{\norm}[1]{\left\|#1\right\|}
\newcommand{\hess}{\mathrm{Hess}}
\DeclareMathOperator{\ric}{Ric}
\DeclareMathOperator{\vol}{Vol}
\DeclareMathOperator*{\argmax}{arg\,max}

\newcommand{\hd}{\widehat{\Delta}}
\newcommand{\hn}{\widehat{\nabla}}
\newcommand{\hg}{\widehat{g}}
\newcommand{\hw}{\widehat{W}}
\newcommand{\hm}{\widehat{M}}
\newcommand{\dv}{\,dV_{\widehat{g}}}
\newcommand{\da}{\,dA_{\widehat{g}}}

\newcommand{\dvol}{\,dV_g}
\newcommand{\darea}{\,dA_g}

\newcommand{\sls}{\{\rho\le r\}}
\newcommand{\ls}{\{\rho=r\}}
\newcommand{\bsls}{\{b\le r\}}
\newcommand{\bls}{\{b=r\}}

\title[Rigidity and volume pinching]{Rigidity and volume pinching for the sharp gradient estimate in positive Ricci curvature}

\author[Kim]{Junyoung Kim}
\address{Korea Advanced Institute of Science and Technology (KAIST)\\
Department of Mathematical Sciences\\
Daejeon, South Korea}
\email{junykim@kaist.ac.kr}

\author[Park]{Jiewon Park}
\address{Korea Advanced Institute of Science and Technology (KAIST)\\
Department of Mathematical Sciences\\
Daejeon, South Korea}
\email{jiewonpark@kaist.ac.kr}

\begin{document}

\begin{abstract}
Colding established a sharp gradient estimate for the Green function on manifolds with nonnegative Ricci curvature, which was extended to positive Ricci curvature by Manea recently. We prove almost rigidity of such gradient estimates for positive Ricci curvature. We show that the average of the gradient deficit, namely $\fint (1-|\nabla b|^2)\,dV$, controls the volume deficit in a quantitative manner; here $b$ is the distance-like function defined using the Green function. This implies a quantitative almost rigidity theorem for manifolds with $\ric\ge(n-1)g$ and a gap theorem for Einstein manifolds. We also prove a rigidity theorem for closed 4-dimensional Einstein manifolds by finding a new monotonicity formula.
\end{abstract}

\maketitle

\section{Introduction}

Gradient estimates for Green functions provide a useful bridge between analysis and geometry. On complete Riemannian manifolds with nonnegative Ricci curvature, Colding introduced a distance-like function associated with the Green function and proved a sharp gradient estimate together with monotonicity formulas measuring the failure of the manifold to be conical \cite{Col12}. Recently, Manea obtained the positive-Ricci counterpart on closed manifolds \cite{Man25}.

In this paper we are also concerned with the positive-Ricci setting. Let $(M^n,g)$ be a closed connected Riemannian manifold of dimension $n\ge 3$ with $\ric\ge(n-1)g$. For $p\in M$, let $G_p$ be the positive Green function of the operator
\begin{equation*}
L=-\Delta+\frac{n(n-2)}{4},
\end{equation*}
with the normalization
\begin{equation}\label{eq:green-function-normalization}
LG_p=(n-2)\omega_{n-1}\delta_p,
\end{equation}
where $\omega_{n-1}=\vol(\S^{n-1})$. The function
\begin{equation}\label{eq:function b}
b_p=2\arcsin\left(\frac{1}{2}G_p^{\frac{1}{2-n}}\right),\quad b_p(p)=0,
\end{equation}
is well-defined, takes values in $[0,\pi]$, and satisfies the sharp estimate
\begin{equation}\label{eq:manea}
    |\nabla b_p|\le 1\quad\text{on $M\setminus\{p\}$}
\end{equation}
by \cite{Man25}. On the round sphere, $b_p$ is exactly the distance from $p$, so equality holds throughout the regular set. Conversely, equality at one point in \eqref{eq:manea} characterizes the round sphere \cite{Man25}.

We ask the natural question whether a suitable lower bound on $|\nabla b_p|$ would imply closeness to the round sphere. Obviously, one cannot impose a global positive lower bound on $|\nabla b_p|$, simply because the manifold is compact and $|\nabla b_p(x)| \to 1$ as $x \to p$ (cf. \cite[Lemma 2.4]{Man25}). Hence we consider the normalized integral gap
\begin{equation}\label{eq:integral gap}
\E_p(M,g):=\fint_M(1-|\nabla b_p|^2)\,dV_g,
\end{equation}
which, by \eqref{eq:manea}, always satisfies $0\le\E_p(M,g)\le 1$. The definition \eqref{eq:integral gap} permits the defect $(1-|\nabla b_p|^2)$ to concentrate on small regions. It is therefore not immediate that smallness of $\E_p(M,g)$ should control any global geometric quantity.

In this paper we prove two main theorems, obtained by two different methods. The first is a rigidity theorem for the Weyl tensor of closed Einstein 4-manifolds under a lower bound on the gradient, proved through a new monotonicity formula for the ratio of two Green functions. The second is a quantitative volume pinching theorem, valid in every dimension $n\ge 3$ under a Ricci lower bound. It shows that the normalized integral gap controls the volume deficit. As consequences of the second theorem, we deduce an almost rigidity theorem for manifolds with $\ric\ge(n-1)g$ and a gap theorem for Einstein manifolds.

For our first main result, we consider $(M^4,g)$ a closed connected Einstein 4-manifold with $\ric=3g$. Fix distinct points $p,q\in M$, and consider the Green-function blow-up 
\begin{equation}\label{eq:blow-up-data}
\hg=G_q^2g\quad\text{on $\hm=M\setminus\{q\}$}\quad\text{and}\quad \rho=\left(\frac{G_q}{G_p}\right)^{1/2}.
\end{equation}
The metric $\hg$ is complete, scalar-flat, and asymptotically flat, while $\rho$ runs from $0$ at $p$ to infinity along the end corresponding to $q$. Thus $\rho$ plays the role of a radial function on the blow-up.

\begin{theorem}[Four-dimensional Weyl rigidity]\label{thm1}
There exists a universal constant $c>0$ with the following property. In the setting of \eqref{eq:blow-up-data}, suppose that, for some $\delta>0$, 
\begin{equation*}
|\hn\rho|_{\hg}\ge\delta\quad\text{on $M\setminus\{p,q\}$}\quad \text{and}\quad \sup_{\hm}\rho^2|\hw|_{\hg}\le c\delta^2,
\end{equation*}
then $\hw\equiv 0$ on $\hm$, and $(M,g)$ is isometric to the round sphere $(\S^4,g_{\S^4})$.
\end{theorem}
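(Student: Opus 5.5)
Our approach is to prove the theorem through a monotonicity formula for the level sets $\{\rho=r\}$ on the blow-up $(\hm,\hg)$. First we record the basic structure. Since $\ric=3g$ in dimension $4$, the scalar curvature is $12$ and $L$ is exactly the conformal Laplacian. Hence $\hg=G_q^2g$ is scalar-flat, and $\hat u:=G_p/G_q$ is $\hg$-harmonic on $M\setminus\{p\}$, with a pole at $p$ and limit $0$ at the end corresponding to $q$; this follows from the conformal covariance of $L$. Thus $\rho=\hat u^{-1/2}$, and on flat $\R^4$ this is precisely the Euclidean radius, since the Green function there is $|x|^{-2}$. We then compute the Bochner identity for $\hat u$ in $\hg$. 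Here $\widehat{\ric}$ is traceless, because $\hg$ is scalar-flat. Since $g$ is Einstein, $\widehat{\ric}$ is given by the conformal change formula as a multiple of the traceless Hessian of $G_q$, weighted by $G_q^{-1}$. The Weyl tensor is conformally invariant up to scaling, so $|\hw|_{\hg}=G_q^{-2}|W|_g$.

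Next we define a quantity of the form
\[
F(r)=r^{-3}\int_{\{\rho=r\}}|\hn\rho|^{3}\,dA_{\hg},
\]
or a Colding-type variant of it. We choose the exponents so that $F$ is constant on flat $\R^4$, and we differentiate $F$ in $r$. Following Colding's computation, with the improved Kato inequality for harmonic functions, $F'(r)$ is bounded below by $\int_{\{\rho\le r\}}$ of a weighted squared norm of the traceless Hessian $|\hn^2\rho^2-\frac{1}{2}|\hn\rho^2|^2\hg/\rho^2|^2$, minus a curvature term involving $\widehat{\ric}(\hn\rho,\hn\rho)$. The key four-dimensional input is the following. Because $g$ is Einstein, the contracted second Bianchi identity gives $\operatorname{div}W=0$, and in the conformal frame this produces a Weitzenböck/Bochner formula for $\widehat{\ric}$ (or for the Cotton tensor, which vanishes). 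In dimension 4 this lets us integrate the Ricci term by parts into a term controlled by $\hw$ together with the traceless Hessian. We then use the hypothesis $|\hn\rho|\ge\delta$, so that the traceless Hessian term dominates $|\hn\rho|^{-2}$ times the error, and we use $\rho^2|\hw|\le c\delta^2$ to absorb the Weyl contribution via Cauchy–Schwarz. The outcome is monotonicity of $F$ together with a coercive inequality: for $c$ small, $F$ is nondecreasing and $F'\ge \kappa\int|\mathring{\hn^2}\rho^2|^2(\cdots)$.

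We then compare the limits. As $r\to0$, near $p$ the metric $\hg$ is asymptotically conical, indeed asymptotically flat after inversion, and $F(0^+)=\omega_3$ by the normalization \eqref{eq:green-function-normalization}. As $r\to\infty$, asymptotic flatness of the $q$-end together with the expansion of $\hat u$ gives $F(\infty)=\omega_3$ as well; the mass terms decay at the rate $r^{-2}$. Monotonicity therefore forces $F'\equiv0$. Hence $\hn^2\rho^2=\frac{|\hn\rho^2|^2}{2\rho^2}\hg$ and the Weyl term vanishes, so $\hw\equiv0$. A function with pure-trace Hessian and nonvanishing gradient makes $\hg$ a warped product over the level sets. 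Combined with $|\hn\rho|\ge\delta$ and the endpoint values, this gives $\hg$ flat $\R^4$. Consequently $g$ is conformally flat Einstein, hence of constant curvature 1, and simply connected because $\hm$ is diffeomorphic to $\R^4$, so $(M,g)=\S^4$. Alternatively, $\hw\equiv 0$ together with Einstein gives constant curvature, and the positive mass theorem applied to the scalar-flat AF $\hg$ then finishes the argument.

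The main obstacle is the second step: producing an exact monotonicity identity in which the Ricci term of $\hg$, which is not pointwise controlled, is converted using Einstein plus $n=4$ into Weyl terms with the correct weight $\rho^2$ and $|\hn\rho|^{-2}$. Our first attempt would be to use the identity $\widehat{\ric}=-2G_q^{-1}\mathring{\hess}_g G_q$ (suitably transformed) and the Bochner formula for the pair $(G_p,G_q)$ symmetrically. We would verify the cancellation in the model case by direct computation before writing the general formula.
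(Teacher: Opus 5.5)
There is a genuine gap, and it sits exactly at the step you flag as the main obstacle. In a Colding-type identity for $r^{-3}\int_{\{\rho=r\}}|\hn\rho|^3\,dA_{\hg}$, the curvature enters only through $\widehat{\ric}(\hn\rho^2,\hn\rho^2)$. For the conformally Einstein metric $\hg$ one has $\widehat{\ric}=\lambda\hg-\widehat{\hess}f-\tfrac12\,df\otimes df$, where $f=2\log G_q$ and $\lambda$ is a function. This expression is second order in $G_q$ and is not controlled by $\hw$. The Einstein condition and $\operatorname{div}W=0$ only tie $\hw$ to first derivatives of $\widehat{\ric}$: the Cotton tensor of $\hg$ is, up to a constant, a contraction of $\hw$ with $\hn f$. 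So there is no evident integration by parts that trades the Ricci term for Weyl terms.

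The blow-up of $\RP^4$ shows concretely that the mechanism you describe must fail:
\begin{itemize}
\item There $\hw\equiv0$.
\item Your endpoint computation uses only the pole expansions, so it still gives $F(0^+)=F(\infty)=\omega_3$.
\item $\mathring{\hn^2}\rho^2\not\equiv0$. Indeed, $\mathring{\hn^2}\rho^2\equiv0$ together with $\hd\rho^2=8|\hn\rho|^2_{\hg}$ forces $\rho^2\,\hn|\hn\rho|^2_{\hg}=0$, so $|\hn\rho|_{\hg}$ would be constant. But $\hn\rho$ vanishes on $\mathrm{Cut}(p)\cap\mathrm{Cut}(q)\cong\RP^2$.
\end{itemize}
In the absence of Weyl terms, your claimed coercive inequality $F'\ge\kappa\int|\mathring{\hn^2}\rho^2|^2(\cdots)$ uses no gradient lower bound. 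Integrating it over $(0,\infty)$ would give $0=F(\infty)-F(0^+)>0$. Hence any correct version must use $|\hn\rho|_{\hg}\ge\delta$ to control $\widehat{\ric}$ itself, not merely to absorb a Weyl error as in your sketch, and you give no mechanism for that.

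Your positive-mass alternative also does not close the argument. It does not exclude $\RP^4$, whose blow-up has positive mass; it is the gradient hypothesis that rules $\RP^4$ out.

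The paper's proof never touches $\widehat{\ric}$. It applies Wu's Weitzenb\"ock formula for conformally Einstein metrics to the weighted density $V=G_q^{-1}|\hw|^2_{\hg}$, which gives the subsolution inequality $\hd V\ge -C|\hw|_{\hg}V$. It then integrates $\rho^2\hd V$ over $\{\rho\le r\}$, using $\hd\rho=3\rho^{-1}|\hn\rho|^2_{\hg}$. For $F(r)=\int_{\{\rho\le r\}}V|\hn\rho|^2_{\hg}\dv$ this yields the Euler inequality $r^2F''-5rF'+(8+\Theta)F\ge0$, with $\Theta=C\delta^{-2}\sup\rho^2|\hw|_{\hg}<1$. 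The gradient bound enters only through $\int_{\{\rho\le r\}}V\dv\le\delta^{-2}F(r)$.

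The indicial roots lie in $(2,4)$. Since $F=O(r^4)$ at $p$, the quotient $F/r^{x}$ is nondecreasing. Since $F\to F_\infty<\infty$, it also tends to $0$ at infinity. Hence $F\equiv0$ and $\hw\equiv0$, and $\RP^4$ is excluded by the vanishing of $\hn\rho$ on $\RP^2$.
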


The proof of Theorem \ref{thm1} adapts the ODE method of Lee--Park for complete Ricci-flat manifolds \cite{LP25} to a conformal blow-up of a positive Einstein manifold. The new tool is a weighted differential inequality for the density $G_q^{-1}|\hw|^2_{\hg}$, derived from Wu's conformally Einstein Weitzenb\"ock formula \cite{Wu2017}. Integrating this over sublevel sets of $\rho$ produces a function $F(r)$ satisfying an Euler-type differential inequality. The pole expansions at $p$ and at the asymptotically flat end impose incompatible boundary behavior unless $F$ vanishes identically. Consequently, the Weyl tensor vanishes, and the lower bound for $|\hn\rho|_{\hg}$ rules out the nontrivial spherical space form.

Our second main result, Theorem \ref{thm2}, states that the integral gap $\E_p(M,g)$ quantitatively controls the volume deficit. In fact the result holds in any dimension $n\ge 3$ and requires only a positive Ricci lower bound, not necessarily Einstein.

\begin{theorem}[Volume pinching by the integral gap]\label{thm2}
Let $n\ge 3$ and $0<\gamma<\frac{2}{n+2}$. Then there exists a constant $C_{n,\gamma}>0$ with the following property. If $(M^n,g)$ is a closed connected Riemannian $n$-manifold with $\ric\ge(n-1)g$ and $p\in M$, then 
\begin{equation}\label{eq:thm2}
    1-\frac{\vol(M,g)}{\vol(\S^n)}\le C_{n,\gamma}\E_p(M,g)^\gamma.
\end{equation}
\end{theorem}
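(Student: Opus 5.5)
The plan is to compare the level-set geometry of $b=b_p$ with the round model through the flux identity for $G=G_p$, and to treat the gradient deficit as a source term in a Volterra-type integral equation.

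\emph{Step 1: flux identity.} Write $G=(2\sin(b/2))^{2-n}$, so that
\[
|\nabla G|=(n-2)(2\sin(b/2))^{1-n}\cos(b/2)\,|\nabla b|.
\]
Integrate $LG=(n-2)\omega_{n-1}\delta_p$ over the sublevel set $\{b<r\}$. Since $G$ is decreasing in $b$, this gives, for a.e.\ $r\in(0,\pi)$,
\[
\int_{\{b=r\}}|\nabla G|\,dA=(n-2)\omega_{n-1}-\frac{n(n-2)}{4}\int_{\{b<r\}}G\,dV.
\]
Set $\Phi(r)=\int_{\{b=r\}}|\nabla b|\,dA$. Then
\[
\Phi(r)\cos(r/2)(2\sin(r/2))^{1-n}=\omega_{n-1}-\frac n4\int_{\{b<r\}}G\,dV .
\]

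\emph{Step 2: a linear equation with an error term.} Split the volume integral as
\[
\int_{\{b<r\}}G\,dV=\int_0^rG(s)\Phi(s)\,ds+\mathcal R(r),\qquad \mathcal R(r)=\int_{\{b<r\}}G\,(1-|\nabla b|)\,dV\ge0,
\]
using the coarea formula for the first term. Then $\Phi$ solves a linear Volterra equation whose homogeneous solution is the round profile $\Phi_{\S}(r)=\omega_{n-1}\sin^{n-1}r$. On $\S^n$, $b$ is the distance, so $\mathcal R\equiv0$ and this is an identity.

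Because the kernel has a sign, a Gronwall/comparison argument applied to $\Phi-\Phi_{\S}$ should give
\[
\Phi(r)\ge\Phi_{\S}(r)-C_n\frac{(2\sin(r/2))^{n-1}}{\cos(r/2)}\,\mathcal R(r)
\]
up to a bounded resolvent factor, at least for $r\le\pi-\eta$. Alternatively, one can differentiate and use an integrating factor; the ODE is explicit.

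Next, $\vol(M)\ge\int_M|\nabla b|\,dV=\int_0^\pi\Phi\,dr$ by \eqref{eq:manea}. Integrating over $[0,\pi-\eta]$ and discarding the rest yields
\[
\vol(\S^n)-\vol(M)\le C\eta^{n}+C\eta^{-1}\mathcal R(\pi).
\]
Here $C\eta^n$ is the round volume near the antipode, and $\eta^{-1}$ comes from $1/\cos(r/2)$.

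\emph{Step 3: controlling $\mathcal R$ by $\E_p$.} Since $0\le1-|\nabla b|\le1-|\nabla b|^2\le1$, Hölder's inequality gives
\[
\mathcal R(\pi)\le\|G\|_{L^q}\,\big(\vol(M)\,\E_p\big)^{1/q'}
\]
for any $q<\frac n{n-2}$. To make this uniform, I need an $L^q$ bound on $G_p$ depending only on $n$ and $q$. Two routes:
\begin{itemize}
\item Use $|\nabla b|\le1$, which implies $b\le d(p,\cdot)$ and hence $G\ge(2\sin(d/2))^{2-n}$. This is the wrong direction for an upper bound, so an upper bound for $G$ should instead come from a Li--Yau type estimate $G_p\le C_n d^{2-n}$ under $\ric\ge(n-1)g$ and $\mathrm{diam}\le\pi$.
\item Alternatively, estimate the $b$-sublevel volumes directly through $V'(r)=\int_{\{b=r\}}|\nabla b|^{-1}$ together with Bishop--Gromov.
\end{itemize}
Either route bounds $\|G\|_{L^q}$ uniformly, and then
\[
\vol(\S^n)-\vol(M)\le C\big(\eta^n+\eta^{-1}\E_p^{\theta}\big).
\]
Optimizing in $\eta$ gives a power $\E_p^{\gamma}$.

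\emph{Main obstacle.} Getting exactly the range $\gamma<\frac2{n+2}$ is the step I expect to be hardest. It requires a better treatment of the two singular regions than crude Hölder:
\begin{itemize}
\item near $p$, where $G\sim b^{2-n}$;
\item near $b=\pi$, where the weight $1/\cos(r/2)$ blows up.
\end{itemize}
My first attempt would cut out both $\{b<\epsilon\}$ and $\{b>\pi-\eta\}$. On the middle region $G$ is bounded by $\epsilon^{2-n}$, so I would estimate $\mathcal R$ there by $\epsilon^{2-n}\E_p$ and bound the excised pieces by volume. I would then optimize $\epsilon,\eta$ jointly against $\E_p$, replacing $1-|\nabla b|$ with $(1-|\nabla b|^2)$ in a weighted Cauchy--Schwarz step if the linear exponent falls short. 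I expect the exponent $2/(n+2)$ to emerge from this two-parameter optimization, with the strict inequality reflecting a logarithmic or $L^q$-endpoint loss.
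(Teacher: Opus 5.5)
Your Steps 1 and 2 are sound. They are the paper's level-set argument without the weight $c(b)^2$. With $D(r)=J(r)-\omega_{n-1}(1-c(r)^n)$ as in the paper, one has $D(r)=\frac n4c(r)^n\int_0^rc(t)^{-n}s(t)^{2-n}H(t)\,dt\le\frac n4\mathcal R(r)$ because $c$ is decreasing. This is your comparison, with $C_n=\frac n4$ and resolvent factor at most $1$. There are two harmless slips. The coarea formula gives $\int_0^rG\Phi\,dt=\int_{\{b<r\}}G|\nabla b|^2\,dV$, so the remainder is $\mathcal R(r)=\int_{\{b<r\}}G(1-|\nabla b|^2)\,dV$. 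Likewise $\int_0^\pi\Phi\,dr=\int_M|\nabla b|^2\,dV$.

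\textbf{The gap is Step 3.} No bound on $\|G_p\|_{L^q}$ ($q>1$) depending only on $n,q$ can hold. Since $\int_MG_p\,dV=\frac4n\omega_{n-1}$ is fixed, H\"older gives $\|G_p\|_{L^q}\ge\frac{4\omega_{n-1}}{n}\vol(M)^{-1/q'}$. Yet $\vol(M)$ can be arbitrarily small under $\ric\ge(n-1)g$, e.g.\ $\S^3/\mathbb{Z}_k$ with $k\to\infty$. There the Li--Yau type bound $G_p\le C_nd^{2-n}$ also fails, since $G_{[p]}=\sum_\gamma G^{\S^3}_{\gamma p}\ge k/2$ everywhere.

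What your H\"older step really needs is the scale-invariant bound $\vol(M)^{1/q'}\|G_p\|_{L^q}\le C_{n,q}$, uniformly over possibly collapsed $M$. You cannot assume noncollapsing, since that is essentially the conclusion. Neither of your routes gives this bound:
\begin{itemize}
\item Li--Yau plus Bishop--Gromov yields only $\vol(M)G_p\lesssim1+d^{2-n}$, and $\fint_Md^{(2-n)q}\,dV$ is not uniformly bounded under collapse. On $\S^3/\mathbb{Z}_k$ it is unbounded in $k$ for $2<q<3$.
\item The sublevel-set route gives only $\vol\{b\le r\}\le\frac4n\omega_{n-1}s(r)^{n-2}$, which is weak-$L^1$ control.
\end{itemize}
The excision fallback fails for the same reason. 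The integral $\int_{\{b<\epsilon\}}G(1-|\nabla b|^2)\,dV$ is bounded only by $\frac4nJ(\epsilon)$, where $J(\epsilon)=\frac n4\int_{\{b\le\epsilon\}}G\,dV$. On $\S^3/\mathbb{Z}_k$ one has $b\le2\arcsin(1/k)$ on all of $M$. So $\{b<\epsilon\}=M$ and $J(\epsilon)$ attains its maximal value $\omega_{n-1}$ once $k$ is large.

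\textbf{The missing ingredient} is an estimate for the Green potential, in normalized norms, that survives collapse. The paper gets it from the Wei--Ye Neumann-type maximum principle, whose constant depends only on $n,q$ here. It solves $Lh=f$ with $f=nc(b)^2(1-|\nabla b|^2)$ and uses three facts:
\begin{itemize}
\item by Green's representation, $h(p)$ is a multiple of the weighted gap $\frac n4\int_MGc(b)^2(1-|\nabla b|^2)\,dV$;
\item integrating the equation gives $\fint_Mh\,dV\le\frac4{n-2}\E_p$;
\item for $q>\frac n2$, $\sup_Mh\le\fint_Mh\,dV+C_{n,q}\bigl(\fint_Mf^q\,dV\bigr)^{1/q}$.
\end{itemize}
By duality this amounts to the normalized bound $\vol(M)\bigl(\fint_MG_p^{q'}\,dV\bigr)^{1/q'}\le C$ for $q'<\frac n{n-2}$, which is what you need. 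The same device with $f=n(1-|\nabla b|^2)$ closes your Step 3 with $\mathcal R(\pi)\lesssim\E_p^{\theta}$ for every $\theta<\frac2n$.

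Your ``main obstacle'' is also misplaced. Once this bound is available, your own estimate $C\eta^n+C\eta^{-1}\mathcal R(\pi)$ (with your cutoff $\eta$) already gives every $\gamma<\frac2{n+1}$. In fact $\int_0^r\frac{dt}{\cos(t/2)}$ is only logarithmic in $1/\cos(r/2)$, so the unweighted route gives every $\gamma<\frac2n$. No two-parameter optimization near $p$ is needed. The paper's exponent $\frac2{n+2}$ comes from its weighted route, which balances $c(r)^n$ against $D(r)\le c(r)^{-2}\cdot\frac n4\int_MGc(b)^2(1-|\nabla b|^2)\,dV$.
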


We remark that Theorem \ref{thm2} holds just with a lower bound on Ricci curvature; no Einstein assumption and no a priori smallness condition for $\E_p(M,g)$ are required in Theorem \ref{thm2}. 
The proof proceeds through a weighted quantity adapted to the Green function. In dimension four, the estimate \eqref{eq:thm2} holds with the exponent $\gamma<\frac{1}{2}$; see Remark \ref{rmk3.5}.

Theorem \ref{thm2} has two corollaries.  In the setting of nonnegative Ricci curvature, Honda--Peng \cite{HondaPeng} established the almost rigidity of the gradient estimate. They showed that under the pointwise assumption $|\nabla b|\ge 1-\delta$, $M$ is measured Gromov-Hausdorff close to the Euclidean space; in fact their results hold more generally for non-parabolic $\mathsf{RCD}(0,N)$ spaces. Our first corollary is an analogous almost rigidity theorem in the positive-Ricci setting, in which only the smallness of the normalized integral gap \eqref{eq:integral gap} is assumed.

\begin{corollary}[Almost rigidity for Ricci lower bound]\label{cor3}
For $n\ge 3$, there exist constants $\ve=\ve(n)>0,C_n>0$ and $\alpha_n>0$ with the following property. Let $(M^n,g)$ be a closed connected Riemannian $n$-manifold with $\ric\ge(n-1)g$ and $p\in M$. If 
\begin{equation*}
\E_p(M,g)\le \ve,
\end{equation*}
then $M$ is diffeomorphic to the round sphere $(\S^n,g_{\S^n})$ and
\begin{equation*}
d_{\mathrm{GH}}((M,g),(\S^n,g_{\S^n}))\le C_n\E_p(M,g)^{\alpha_n}.
\end{equation*}
\end{corollary}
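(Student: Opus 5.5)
The plan is to derive Corollary \ref{cor3} directly from Theorem \ref{thm2} together with the classical volume-pinching results for $\ric\ge(n-1)g$. Fix any $\gamma\in(0,\frac{2}{n+2})$, say $\gamma=\frac{1}{n+2}$. Theorem \ref{thm2} then gives the volume bound
\begin{equation*}
\vol(M,g)\ge\bigl(1-C_{n,\gamma}\E_p(M,g)^{\gamma}\bigr)\vol(\S^n).
\end{equation*}
Hence, if $\E_p(M,g)\le\ve$, the manifold has almost maximal volume. The two conclusions then follow from known facts about manifolds with almost maximal volume, and no further work with the Green function is needed.

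\textbf{Diffeomorphism.} By Cheeger--Colding's volume-pinching theorem (building on Perelman and Colding's volume convergence), there is $\eta(n)>0$ with the following property: if $\ric\ge(n-1)g$ and $\vol(M)\ge(1-\eta)\vol(\S^n)$, then $M$ is diffeomorphic to $\S^n$. We choose $\ve(n)$ so small that $C_{n,\gamma}\ve^{\gamma}\le\eta(n)$.

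\textbf{Gromov--Hausdorff estimate.} Write $\theta:=1-\vol(M)/\vol(\S^n)\le C\E_p^{\gamma}$. We need a quantitative form of Colding's theorem (volume close to that of the sphere implies GH-close to the sphere), namely $d_{\mathrm{GH}}(M,\S^n)\le C_n\theta^{\beta_n}$ for some $\beta_n>0$. I would obtain it in two steps.

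First, almost maximal volume forces almost maximal diameter. Bishop--Gromov gives $\vol(M)\le\vol(B_{\mathrm{diam}M}(\S^n))$, so $\vol(\S^n\setminus B_{D})\le\theta\vol(\S^n)$ with $D=\mathrm{diam}(M)$. Since the volume of a spherical cap of radius $\pi-D$ is comparable to $(\pi-D)^n$, this yields $\pi-D\le C\theta^{1/n}$. The same Bishop--Gromov argument, applied to relative volumes of balls centred at arbitrary points, shows the following: for every $x\in M$ and every $r$, the ratio $\vol(B_r(x))/V_r$ is $\theta$-close to $\vol(M)/\vol(\S^n)$, where $V_r$ is the volume of an $r$-ball in $\S^n$.

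Second, I would build an explicit $\epsilon$-GH map with $\epsilon$ a power of $\theta$. Using the almost-maximal-diameter points $x_0,x_1$, every point $y$ satisfies the almost-equality $d(x_0,y)+d(y,x_1)\le\pi+C\theta^{1/n}$; this holds because the balls $B_{d(x_0,y)}(x_0)$ and $B_{d(y,x_1)}(y)$-type sets are almost volume-complementary. One then chooses a maximal $\theta^{a}$-net in $M$ and shows that distances within it match spherical distances up to $C\theta^{b}$. Rather than reprove all of this, I would cite the quantitative version of Colding's shape theorem, which follows from Cheeger--Colding's almost-rigidity with explicit Hölder dependence. Alternatively, one can accept the statement in the form: for each $n$ there exist $\beta_n>0$ and $C_n$ with $d_{\mathrm{GH}}\le C_n\theta^{\beta_n}$ once $\theta$ is small. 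Combining this with Theorem \ref{thm2} gives $\alpha_n=\gamma\beta_n$. For $\E_p\le\ve$ the bound only needs $\theta$ small, and the trivial bound $d_{\mathrm{GH}}\le\pi$ is never needed.

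The main obstacle is the second step: providing a genuinely quantitative (power-law) version of Colding's volume-to-GH rigidity. The qualitative version follows at once by compactness, but a rate does not. My first attempt is the explicit diameter-plus-net argument sketched above, which yields the distance distortion as an explicit power of $\theta$ via Bishop--Gromov comparisons of annuli. If that becomes too delicate, I would fall back on the known Hölder-type statement in the literature.
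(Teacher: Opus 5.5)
Your proposal is correct and is essentially the paper's proof: Theorem \ref{thm2} gives almost maximal volume, and a quantitative volume pinching theorem then yields the diffeomorphism and a power-law Gromov--Hausdorff bound with $\alpha_n=\gamma\beta_n$. The power-law statement you propose to accept is exactly Aubry's theorem \cite[Theorem 4]{Aubry2005}, which the paper cites and which also gives the diffeomorphism; it is not read off from Cheeger--Colding's almost-rigidity, whose dependence is not explicit, and with that citation your diameter-plus-net sketch is not needed.
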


Indeed, once we obtain the almost-maximal volume of Theorem \ref{thm2}, the quantitative volume pinching theorem of Aubry \cite[Theorem 4]{Aubry2005} yields both the diffeomorphism and a power-type Gromov--Hausdorff estimate in terms of the volume deficit. The diffeomorphism can also be obtained using Cheeger--Colding theory \cite{CheegerColding1997}. One may take $\alpha_n=\gamma\beta(n)$ for any $\gamma<\frac{2}{n+2}$ and $\beta(n)$ the exponent of \cite{Aubry2005}.

For Einstein metrics, almost-maximal volume implies isometry to the round sphere. Combining Theorem \ref{thm2} with the sphere theorem for Einstein manifolds of Honda--Mondello \cite[Theorem C]{HondaMondello}, we obtain the following result; see also \cite[Theorem 3.12]{HondaMondino}, and \cite[Chapter 12]{Besse} for an alternative proof via the moduli space of Einstein metrics.

\begin{corollary}\label{thm3}
For $n\ge 3$, there exists a constant $\ve=\ve(n)>0$ with the following property. Let $(M^n,g)$ be a closed connected Einstein $n$-manifold with $\ric=(n-1)g$ and $p\in M$. If
\begin{equation*}
\E_p(M,g)\le \ve,
\end{equation*}
then $(M,g)$ is isometric to the round sphere $(\S^n,g_{\S^n})$.
\end{corollary}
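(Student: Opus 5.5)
Corollary \ref{thm3} reduces to Theorem \ref{thm2} combined with the volume rigidity of Einstein metrics near the round sphere. An Einstein manifold with $\ric=(n-1)g$ in particular satisfies $\ric\ge(n-1)g$, so Theorem \ref{thm2} applies. Fix any admissible exponent, say $\gamma=\frac{1}{n+2}$. Then
\begin{equation*}
1-\frac{\vol(M,g)}{\vol(\S^n)}\le C_{n,\gamma}\,\E_p(M,g)^{\gamma}\le C_{n,\gamma}\,\ve^{\gamma}.
\end{equation*}
Thus the volume deficit is at most $\eta:=C_{n,\gamma}\ve^\gamma$, and $\eta$ can be made as small as we like by shrinking $\ve$.

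Next we invoke the sphere theorem for Einstein manifolds of Honda--Mondello \cite[Theorem C]{HondaMondello}. It provides $\eta_0(n)>0$ such that any closed Einstein $n$-manifold with $\ric=(n-1)g$ and $\vol(M,g)\ge(1-\eta_0)\vol(\S^n)$ is isometric to the round sphere. Choosing $\ve=\ve(n)$ with $C_{n,\gamma}\ve^{\gamma}\le\eta_0(n)$ yields the conclusion. Since $\gamma$ was fixed depending only on $n$, $\ve$ depends only on $n$.

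If one prefers to avoid citing \cite{HondaMondello} as a black box, an alternative route runs as follows. Corollary \ref{cor3}, via Cheeger--Colding volume convergence, shows that a sequence of counterexamples with $\E_p\to0$ converges in the Gromov--Hausdorff sense to the round sphere. Einstein regularity (Anderson's $\varepsilon$-regularity under almost-maximal volume) then upgrades this to smooth $C^{k,\alpha}$ convergence in harmonic coordinates. Finally, local rigidity of the round metric in the moduli space of Einstein metrics \cite[Chapter 12]{Besse} (the round sphere has no infinitesimal Einstein deformations) forces the metrics to be round for large indices. That would contradict the choice of counterexamples.

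The main step is the almost-maximal volume bound, and it is already supplied by Theorem \ref{thm2}. The only delicate point is ensuring that the threshold $\eta_0$ from the rigidity theorem is uniform in $n$ alone, and this is exactly what \cite{HondaMondello} provides.
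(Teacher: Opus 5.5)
Your proposal is correct and is the paper's proof: Theorem \ref{thm2} with a fixed admissible $\gamma$ makes the volume deficit small, and the Einstein sphere theorem of Honda--Mondello \cite[Theorem C]{HondaMondello} then gives isometry to the round sphere. Your alternative moduli-space route is the one the paper also mentions, via \cite[Chapter 12]{Besse}.
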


Finally, the hypotheses of Theorem \ref{thm1} are automatically satisfied when the normalized integral gap is sufficiently small, provided the second pole $q$ is chosen to be the maximum point of $b_p$.

\begin{theorem}\label{thm4}
Let $0<\delta<2$ and $\zeta>0$. Then there exists a constant $\ve_0=\ve_0(\delta,\zeta)>0$ with the following property. Let $(M^4,g)$ be a closed connected Einstein manifold with $\ric=3g$, fix $p\in M$, and choose 
\begin{equation*}
q\in\argmax_{x\in M}b_p(x).
\end{equation*}
If
\begin{equation*}
\E_p(M,g)\le\ve_0,
\end{equation*}
then
\begin{equation*}
|\hn\rho|_{\hg}\ge\delta\quad\text{on $M\setminus\{p,q\}$}\quad\text{and}\quad\sup_{\hm}\rho^2|\wh{W}|_{\hg}<\zeta.
\end{equation*}
\end{theorem}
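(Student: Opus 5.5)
The plan is a compactness/contradiction argument that rests on Corollary \ref{thm3}. Once $\E_p(M,g)\le\ve(4)$, Corollary \ref{thm3} says $(M,g)$ is isometric to the round sphere $(\S^4,g_{\S^4})$. So for $\ve_0\le\ve(4)$ it is enough to check both conclusions on the round sphere, with $p$ arbitrary and $q$ a maximum point of $b_p$.

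On $\S^4$ with $L=-\Delta+2$ and the normalization \eqref{eq:green-function-normalization}, the Green function is explicit. Inverting \eqref{eq:function b} gives
\begin{equation*}
G_p=\bigl(2\sin(d_p/2)\bigr)^{-2},
\end{equation*}
since $b_p=d_p$ there. Hence $b_p=d(p,\cdot)$, and its unique maximum point is the antipode $q=-p$.

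\textbf{Weyl bound.} The metric $\hg=G_q^2g$ is conformal to the round metric, so it is conformally flat. Because $W$ is conformally invariant, we get $\hw\equiv0$, and therefore $\sup_{\hm}\rho^2|\hw|_{\hg}=0<\zeta$ for every $\zeta>0$.

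\textbf{Gradient bound.} Write $s=d_p$, so that $d_q=\pi-s$. Then
\begin{equation*}
\rho=\frac{\sin(s/2)}{\cos(s/2)}=\tan(s/2).
\end{equation*}
This is exactly the stereographic coordinate, and $\hg=G_q^2g=\bigl(4\cos^2(s/2)\bigr)^{-2}g$. We have $|\nabla\rho|_g=\tfrac12\sec^2(s/2)$, and $|\hn\rho|_{\hg}=G_q^{-1}|\nabla\rho|_g$, which gives
\begin{equation*}
|\hn\rho|_{\hg}=4\cos^2(s/2)\cdot\tfrac12\sec^2(s/2)=2.
\end{equation*}
So $|\hn\rho|_{\hg}\equiv2$, which is consistent with $\hg$ being, up to scaling, flat $\R^4$ written in the coordinate $\rho$. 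The normalization constant has to be checked carefully so that the value really is $2$; this is why the hypothesis is stated as $\delta<2$. Since $2>\delta$, the gradient conclusion holds.

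The only real point to verify is that the normalizations give exactly the constant $2$ for $|\hn\rho|_{\hg}$. If one wanted a proof that does not go through the full isometry of Corollary \ref{thm3}, one would instead argue by contradiction: take a sequence with $\E_p\to0$, apply Theorem \ref{thm2} and Corollary \ref{cor3} to get volume and GH convergence to the sphere, and use Anderson's $\epsilon$-regularity for Einstein metrics to upgrade this to smooth convergence. The delicate parts would then be uniform control near the poles, i.e.\ the asymptotics of $G$ at $p$ and $q$ and the convergence of the argmax points $q_i$ to the antipode. Corollary \ref{thm3} makes all of this unnecessary, because the sphere is attained exactly.
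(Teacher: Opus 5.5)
Your argument is correct. Corollary~\ref{thm3} is proved in Section~3 independently of Theorem~\ref{thm4}, so for $\ve_0\le\ve(4)$ the manifold is the round $\S^4$ and the maximum point of $b_p=d(p,\cdot)$ is the antipode. Your computations $\rho=\tan(s/2)$, $|\hn\rho|_{\hg}\equiv 2$ and $\hw\equiv0$ are right; the paper gets the same value $2$ for the limiting configuration.

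The paper deliberately avoids your route. Its proof is the compactness argument you sketch at the end:
\begin{itemize}
\item It takes a contradicting sequence with $\E_{p_i}\to0$ and uses Theorem~\ref{thm2} together with Colding's volume convergence to get Gromov--Hausdorff convergence to $\S^4$.
\item Anderson's theory upgrades this to smooth convergence, since the metrics are Einstein, and maximality of $b_{p_i}$ at $q_i$ forces $q_i$ to the antipode.
\item The Weyl term is handled by $\rho_i^2|\hw_i|_{\hg_i}=|W_i|_{g_i}/(G_{p_i}G_{q_i})\le 16\|W_{g_i}\|_{L^\infty}$.
\item Near $p_i$ and $q_i$, where smooth convergence alone gives no control, it uses the uniform pole expansions $|\hn\rho_i|_{\hg_i}=\gamma_i^{-1/2}+O(s_i)$ with $\gamma_i\to1/4$.
\end{itemize}

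The reason for this longer route is stated in the introduction and at the end of Section~4. Theorem~\ref{thm4} combined with Theorem~\ref{thm1} is meant to give a second proof of Corollary~\ref{thm3} in dimension four that does not use the Honda--Mondello sphere theorem. Your proof runs Theorem~\ref{thm4} through Corollary~\ref{thm3}, and hence through Honda--Mondello, so that second proof would become circular.

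What your route buys is a two-line verification with $\ve_0=\ve(4)$ independent of $\delta$ and $\zeta$. That independence shows that, on your route, the theorem says nothing beyond Corollary~\ref{thm3}. As a proof of the statement as written it is valid, but it cannot play the role the paper assigns to Theorem~\ref{thm4}.
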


Taking $\zeta<c\delta^2$ in Theorem \ref{thm4} and applying Theorem \ref{thm1} gives a second proof of Corollary \ref{thm3} in dimension four in the case $q\in\argmax_{x\in M}b_p(x)$, which does not rely on the sphere theorem of \cite{HondaMondello}.

\subsection*{Organization of the paper}
In Section 2, we prove the four-dimensional Weyl rigidity theorem by deriving the weighted Weitzenb\"ock inequality, establishing the endpoint asymptotics of the sublevel-set functional, and applying an ODE argument by finding a monotone functional. In Section 3, we develop the level-set identities, prove the quantitative volume pinching, and deduce the Einstein gap theorem. In Section 4, we verify the blow-up hypotheses under small integral gap and obtain another four-dimensional proof without using the sphere theorem.

\subsection*{Acknowledgment} This work was supported by the National Research Foundation of Korea(NRF) grant funded by the Korea government(MSIT) RS-2024-00346651. 

\section{Proof of Theorem \ref{thm1}}
Throughout this section, $(M^4,g)$ is a closed connected Einstein 4-manifold with $\ric_g=3g$. For $a\in M$, we denote by $G_a$ the positive Green function for $L=-\Delta+2$ with pole at $a$ and normalization $LG_a=2\omega_3\delta_a$. Fix two distinct points $p,q\in M$ and set
\begin{equation*}
\hg=G_q^2g\quad\text{on $\hm:=M\setminus\{q\}$},\quad \rho=\left(\frac{G_q}{G_p}\right)^{1/2}\quad\text{on $M\setminus\{p,q\}$.}
\end{equation*}
By conformal covariance, $\hg$ is scalar-flat; the standard Green function expansion also shows that it is complete and asymptotically flat \cite{LeeParker1987}. Note that $\rho$ is continuous and positive on $M\setminus\{p,q\}$. Moreover, $\rho$ extends continuously to $p$ with $\rho(p)=0$ and tends to infinity at the asymptotically flat end corresponding to $q$. 

\begin{lemma}\label{lem:rho equations}
On $M\setminus\{p,q\}$, we have
\begin{equation}\label{eq:lem2.1}
\hd\rho=3\rho^{-1}|\hn\rho|^2_{\hg},\qquad \hd\rho^2=8|\hn\rho|_{\hg}^2.
\end{equation}
\end{lemma}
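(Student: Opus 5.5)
The identities follow from the conformal transformation law of the Laplacian in dimension four, together with the fact that $G_p$ and $G_q$ are both $L$-harmonic away from their poles.

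\textbf{Step 1: the conformal Laplacian.} In dimension $n=4$ the conformal Laplacian is $L_g=-\Delta_g+\tfrac{1}{6}R_g$. Since $R_g=12$, this is exactly $L=-\Delta+2$. For $\hg=u^2g$ with $u>0$, conformal covariance gives
\begin{equation*}
L_{\hg}(f)=u^{-3}L_g(uf).
\end{equation*}
Take $u=G_q$. Then $\hat R=0$ by conformal covariance, so $L_{\hg}=-\hd$. Applying the covariance with $f=G_p/G_q$ on $M\setminus\{p,q\}$:
\begin{equation*}
-\hd(G_p/G_q)=G_q^{-3}L_gG_p=0 .
\end{equation*}
So $h:=G_p/G_q=\rho^{-2}$ is $\hg$-harmonic on $M\setminus\{p,q\}$. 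One could also verify this directly from $\hd f=u^{-2}(\Delta f+2u^{-1}\langle\nabla u,\nabla f\rangle)$, but the covariance route is cleaner.

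\textbf{Step 2: chain rule.} Write $\rho=h^{-1/2}$, so $\rho=\phi(h)$ with $\phi(s)=s^{-1/2}$. Since $\hd h=0$,
\begin{equation*}
\hd\rho=\phi''(h)|\hn h|^2_{\hg}=\tfrac34 h^{-5/2}|\hn h|^2_{\hg}.
\end{equation*}
Next express $\hn h$ through $\rho$. From $h=\rho^{-2}$ we get $\hn h=-2\rho^{-3}\hn\rho$, hence $|\hn h|^2=4\rho^{-6}|\hn\rho|^2$. Using $h^{-5/2}=\rho^5$,
\begin{equation*}
\hd\rho=\tfrac34\,\rho^5\cdot 4\rho^{-6}|\hn\rho|^2_{\hg}=3\rho^{-1}|\hn\rho|^2_{\hg}.
\end{equation*}

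\textbf{Step 3: the square.} By the product rule,
\begin{equation*}
\hd\rho^2=2\rho\,\hd\rho+2|\hn\rho|^2_{\hg}=6|\hn\rho|^2_{\hg}+2|\hn\rho|^2_{\hg}=8|\hn\rho|^2_{\hg}.
\end{equation*}

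\textbf{Main obstacle.} The only delicate point is the normalization conventions. The covariance exponent is $\tfrac{n+2}{n-2}=3$ when $n=4$, and we need that the operator $L=-\Delta+2$ coincides with the conformal Laplacian. Both are guaranteed by $\ric=3g$, which gives $R=12$ and so $\tfrac{n-2}{4(n-1)}R=\tfrac16\cdot 12=2$. Smoothness on $M\setminus\{p,q\}$ is automatic, because both Green functions are smooth and positive there.
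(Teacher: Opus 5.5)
Your proof is correct and follows the paper's argument. Conformal covariance, together with $\widehat{R}=0$, shows that $G_p/G_q=\rho^{-2}$ is $\hg$-harmonic, and a chain-rule computation followed by the product rule gives both identities. The only difference is that the paper expands $\hd\rho^{-2}$ directly instead of writing $\rho=h^{-1/2}$, which is the same calculation.
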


\begin{proof}
The conformal covariance of the conformal Laplacian and the scalar-flatness of $\hg$ give
\begin{equation*}
0=L_{\hg}\left(\frac{G_p}{G_q}\right)=-\hd\rho^{-2}.
\end{equation*}
Since 
\begin{equation*}
\hd\rho^{-2}=-2\rho^{-3}\hd\rho+6\rho^{-4}|\hn\rho|_{\hg}^2,
\end{equation*}
the first identity follows. Then,
\begin{equation*}
\hd\rho^2=2\rho\hd\rho+2|\hn\rho|_{\hg}^2=8|\hn\rho|_{\hg}^2,
\end{equation*}
which proves the second assertion.
\end{proof}

The following lemma replaces the Kato inequality used in \cite{LP25}. Its proof uses the conformally Einstein Weitzenb\"ock formula in \cite{Wu2017}.

\begin{lemma}\label{lem:Kato}
Define $ V:=G_q^{-1}|\hw|^2_{\hg}$. Then there exists a universal constant $C>0$ such that
\begin{equation}\label{eq:lem2.2}
\hd V\ge-C|\hw|_{\hg}V.
\end{equation} 
\end{lemma}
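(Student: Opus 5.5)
The plan is to move everything back to the Einstein metric $g$, where the Weyl tensor obeys a clean elliptic equation, and then transfer the result to $\hg$ using the conformal transformation rules. Write $u=G_q$, so that $\hg=u^2g$ on $\hm$ and $Lu=0$ there, i.e. $\Delta_g u=2u$. Since the $(1,3)$ Weyl tensor is conformally invariant, we have $|\hw|_{\hg}=u^{-2}|W|_g$. Hence
\[
V=u^{-5}|W|_g^2 .
\]
In dimension four the Laplacian transforms as
\[
\hd f=u^{-4}\,\mathrm{div}_g\!\left(u^{2}\nabla f\right)=u^{-2}\left(\Delta f+2u^{-1}\langle\nabla u,\nabla f\rangle\right).
\]
So it suffices to bound $\Delta(u^{-5}|W|^2)+2u^{-1}\langle\nabla u,\nabla(u^{-5}|W|^2)\rangle$ from below by $-C|W|_g\,u^{-5}|W|^2$. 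Multiplying that bound by $u^{-2}$ and using $u^{-2}|W|_g=|\hw|_{\hg}$ then gives exactly \eqref{eq:lem2.2}.

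On the Einstein manifold $(M,g)$, the Weyl tensor is divergence free, and the second Bianchi identity gives the Weitzenb\"ock formula
\[
\Delta W=\tfrac{R}{2}W+W*W,\qquad R=12 ,
\]
with a universal quadratic term. It follows that
\[
\Delta|W|^2\ge 2|\nabla W|^2+c_0|W|^2-C|W|^3,
\]
where $c_0>0$. (This is the content of Wu's conformally Einstein formula \cite{Wu2017} when written in the $\hg$-frame, and it can be quoted directly in that form.) Next I would expand $\Delta(u^{-5}|W|^2)$ and the drift term by the product rule. The Laplacian of the weight is $\Delta u^{-5}=-5u^{-6}\Delta u+30u^{-7}|\nabla u|^2$, and $\Delta u=2u$ contributes $-10u^{-5}$. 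This leaves three kinds of terms: multiples of $u^{-7}|\nabla u|^2|W|^2$, cross terms $u^{-6}\langle\nabla u,\nabla|W|^2\rangle$, and the good term $2u^{-5}|\nabla W|^2$.

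The main obstacle is controlling the cross terms and showing that the net coefficient of $|\nabla u|^2|W|^2$ together with the zeroth-order terms is nonnegative. For the cross terms I will use the refined Kato inequality for harmonic Weyl tensors in dimension four,
\[
|\nabla|W||^2\le\tfrac35|\nabla W|^2 .
\]
With it, I absorb the cross terms via Cauchy--Schwarz:
\[
|\langle\nabla u,\nabla|W|^2\rangle|\le 2|W|\,|\nabla u|\,|\nabla|W||,
\]
and then complete a square against $\tfrac{10}{3}u^{-5}|\nabla|W||^2$. If the exponent $-5$ together with the Kato constant $3/5$ exactly balance (which I expect, since Wu's formula is designed for this weight), the gradient terms combine into a nonnegative square. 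The remaining zeroth-order terms are $c_0u^{-5}|W|^2-10u^{-5}|W|^2$. I would check that they are nonnegative, or that they cancel against part of the $|\nabla u|^2$ term via Wu's identity. If not, I would instead work with $|W|^{s}$ for a suitable power to gain the Kato improvement.

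What survives after these steps is $-Cu^{-5}|W|^3$. Multiplying by $u^{-2}$ gives $-C|\hw|_{\hg}V$. At the points where $W=0$, the function $V$ is only Lipschitz, and I would handle them in the usual way by working with $\sqrt{|W|^2+\epsilon^2}$ and letting $\epsilon\to0$, or by interpreting \eqref{eq:lem2.2} in the distributional sense.
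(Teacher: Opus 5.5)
Your route is correct, and it differs from the paper's. The paper stays in the $\hg$-frame and quotes Wu's weighted Weitzenb\"ock formula for $\hd_f$ with $f=2\log G_q$. It then uses the exact conjugation $e^{-f/2}\hd_f(e^{f/2}V)=\hd V+2G_q^{-2}V$, which has no first-order terms, so $2|\hn\hw|^2_{\hg}$ is simply discarded and no Kato inequality is needed. You use only classical facts about the Einstein metric $g$ (Derdzi\'nski's formula and the refined Kato inequality), at the price of cross terms that must be absorbed.

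You left that absorption as an expectation; it does close, though not by exact balance. Your product-rule expansion gives, with $u=G_q$ and all quantities on the right taken with respect to $g$,
\[
\hd V=u^{-7}\Bigl(\Delta|W|^2-8u^{-1}\langle\nabla u,\nabla|W|^2\rangle+20u^{-2}|\nabla u|^2|W|^2-10|W|^2\Bigr).
\]
Since $\Delta|W|^2\ge 2|\nabla W|^2+12|W|^2-C|W|^3$, your $c_0$ is $R=12$, and the zeroth-order part is $2|W|^2\ge0$.

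For the gradient part, set $a=|\nabla W|$ and $b=u^{-1}|\nabla u||W|$, and use the refined Kato inequality in the form $|\nabla|W|^2|\le 2\sqrt{3/5}\,|W||\nabla W|$. The gradient part is then at least $2a^2-16\sqrt{3/5}\,ab+20b^2$, which is positive definite because $\tfrac{768}{5}<160$, with only a small margin. With the ordinary Kato inequality the form $2a^2-16ab+20b^2$ is indefinite, since $256>160$. So the sharp constant $3/5$ is essential to your argument, whereas the paper's argument needs no such constant.

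You end with $\hd V\ge 2G_q^{-2}V-C|\hw|_{\hg}V$, which is exactly the paper's intermediate inequality. Finally, $V=u^{-5}|W|_g^2$ is smooth, not merely Lipschitz (only $|W|$ is Lipschitz). The Kato inequality in the form above holds everywhere, so neither the regularization nor the $|W|^s$ fallback is needed.
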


\begin{proof}
Let
\begin{equation*}
f=2\log G_q\quad\text{on $\hm$}
\end{equation*}
so that $g=e^{-f}\hg$. We note that $\ric_g=3g$ implies
\begin{equation*}
\ric_{\hg}+\wh{\hess}f+\frac{1}{2}df\otimes df=\lambda\hg,
\end{equation*}
where
\begin{equation}\label{eq:lem2.2-1}
4\lambda=\hd f+\frac{1}{2}|\hn f|_{\hg}^2.
\end{equation}
On the other hand, the scalar curvature transformation for $g=e^{-f}\hg$ implies 
\begin{equation}\label{eq:lem2.2-2}
\hd f-\frac{1}{2}|\hn f|_{\hg}^2=4G_q^{-2}.
\end{equation}
Combining \eqref{eq:lem2.2-1} and \eqref{eq:lem2.2-2}, we obtain
\begin{equation}\label{eq:lem2.2-3}
4\lambda-|\hn f|_{\hg}^2=4G_q^{-2}.
\end{equation}

Let 
\begin{equation*}
\hd_f=\hd-\langle\hn f,\hn\cdot\rangle.
\end{equation*}
The conformally Einstein Weitzenb\"ock formula in \cite[Corollary 1.1]{Wu2017} states that
\begin{equation*}
\hd_f|\hw^{\pm}|^2_{\hg}=2|\hn\hw^{\pm}|^2_{\hg}+\left(4\lambda-|\hn f|^2_{\hg}\right)|\hw^{\pm}|^2_{\hg}-144\det \hw^{\pm}
\end{equation*}
on an oriented coordinate neighborhood. The estimate $|\det\hw^{\pm}|\le C|\hw^{\pm}|^3_{\hg}$, together with \eqref{eq:lem2.2-3}, yields 
\begin{equation}\label{eq:lem2.2-4}
\hd_f|\hw|^2_{\hg}\ge 4G_q^{-2}|\hw|^2_{\hg}-C|\hw|^3_{\hg}.
\end{equation}
Note that the inequality is independent of the chosen local orientation and is global.

A direct computation gives
\begin{align*}
e^{-f/2}\hd_f|\hw|^2_{\hg}&=e^{-f/2}\hd_f(e^{f/2}V)\\&=\hd V+\left(\frac{1}{2}\hd f-\frac{1}{4}|\hn f|^2_{\hg}\right)V\\&=\hd V+2G_q^{-2}V,
\end{align*}
where we used \eqref{eq:lem2.2-2}. Multiplying \eqref{eq:lem2.2-4} by $e^{-f/2}$ then gives
\begin{equation*}
\hd V+2G_q^{-2}V\ge 4G_q^{-2}V-C|\hw|_{\hg}V,
\end{equation*}
which proves \eqref{eq:lem2.2}.
\end{proof}

We assume that, for some $\delta>0$, 
\begin{equation}\label{eq:assumption sec1}
|\hn\rho|_{\hg}\ge\delta\quad\text{on $M\setminus\{p,q\}$.}
\end{equation}
Every positive value of $\rho$ is then regular. We define the function $F:[0,\infty)\rightarrow\R$ by
\begin{align}
F(r)=\int_{\sls}V|\hn\rho|_{\hg}^2\dv.
\end{align}
Each sublevel set $\{\rho\le r\}$ is a compact subset of $\hm$, so $F$ is well-defined.

We use the following notation for asymptotic expansions. For a function $f(r,\theta)$ in geodesic polar coordinates, we denote by $f=O_k(r^\mu)$ if
\begin{equation*}
|\partial_r^j\nabla_\theta^\alpha f(r,\theta)|\le C_{j,\alpha} r^{\mu-j}\quad (j+|\alpha|\le k),
\end{equation*}
uniformly in $\theta$. The Green function expansion \cite{LeeParker1987} gives
\begin{equation}\label{eq:asymptotic of G}
G_p(r,\theta)=r^{-2}(1+O_1(r^2)).
\end{equation}
By symmetry of the Green function, set
\begin{equation*}
\gamma:=G_q(p)=G_p(q)>0.
\end{equation*}

\begin{lemma}\label{lem:graphical expression}
Let $s=d_g(p,\cdot)$ and $t=d_g(q,\cdot)$. There exist two smooth functions $A_p, A_q\in C^\infty(\S^3)$ such that 
\begin{align}
\label{eq:rho near p} \rho(s,\theta)=\gamma^{1/2}s+A_p(\theta)s^2+O_1(s^3)&\quad \text{as } s\rightarrow0,\\
\label{eq:rho near q} \rho(t,\theta)=\gamma^{-1/2}t^{-1}+A_q(\theta)+O_1(t)&\quad\text{as } t\rightarrow0.
\end{align}
In particular, for all sufficiently small $r>0$, the level set $\ls$ near $p$ can be expressed as the polar graph
\begin{equation*}
s=\sigma(r,\theta),\quad \sigma(r,\theta)=\gamma^{-1/2}r+O_1(r^2),\quad \partial_r\sigma=\gamma^{-1/2}+O(r),
\end{equation*}
and for all sufficiently large $r>0$, the same level set near $q$ can be expressed as the polar graph
\begin{equation*}
t=\tau(r,\theta),\quad \tau(r,\theta)=\gamma^{-1/2}r^{-1}+O_1(r^{-2}),\quad \partial_r\tau=-\gamma^{-1/2}r^{-2}+O(r^{-3}).
\end{equation*}
Moreover, $\sls=\{s\le\sigma(r,\theta)\}$ for all sufficiently small $r$, whereas $\{\rho>r\}=\{0<t<\tau(r,\theta)\}$ for all sufficiently large $r$.
\end{lemma}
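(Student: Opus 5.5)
We derive the two expansions directly from the Green function expansion \eqref{eq:asymptotic of G} and the smoothness of the other Green function away from its pole. Then we obtain the graphical descriptions of the level sets by a one-variable inverse function argument in the radial coordinate, uniform in $\theta$.

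\textbf{Near $p$.} Here $G_q$ is smooth, with $G_q(p)=\gamma$. In polar coordinates about $p$ a Taylor expansion gives $G_q(s,\theta)=\gamma+B(\theta)s+O_1(s^2)$, where $B(\theta)=\langle\nabla G_q(p),\theta\rangle$. From \eqref{eq:asymptotic of G}, $G_p^{-1}=s^2(1+O_1(s^2))$. Hence
\begin{equation*}
\rho^2=G_qG_p^{-1}=\gamma s^2\left(1+\gamma^{-1}B(\theta)s+O_1(s^2)\right).
\end{equation*}
Taking square roots gives \eqref{eq:rho near p} with $A_p=\tfrac12\gamma^{-1/2}B$. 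The $O_1$ control of derivatives is preserved under products and under the square root of $1+O_1(s)$.

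\textbf{Near $q$.} The roles are swapped: $G_q=t^{-2}(1+O_1(t^2))$ and $G_p=\gamma+B'(\theta)t+O_1(t^2)$. Then
\begin{equation*}
\rho^2=\gamma^{-1}t^{-2}\left(1-\gamma^{-1}B'(\theta)t+O_1(t^2)\right),
\end{equation*}
which gives \eqref{eq:rho near q} with $A_q=-\tfrac12\gamma^{-3/2}B'$.

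One point needs care. Since \eqref{eq:asymptotic of G} is stated with $O_1(r^2)$, we only control one derivative of the remainder. That is exactly enough for the $O_1$ remainders claimed, and for $\partial_s\rho=\gamma^{1/2}+O(s)$ and $\partial_t\rho=-\gamma^{-1/2}t^{-2}+O(1)$.

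\textbf{Level sets.} For fixed $\theta$, the map $s\mapsto\rho(s,\theta)$ has $\partial_s\rho\ge\tfrac12\gamma^{1/2}>0$ on $(0,s_0]$, uniformly in $\theta$. So it is strictly increasing there and inverts to $s=\sigma(r,\theta)$ for $r<\min_\theta\rho(s_0,\theta)$. Substituting the ansatz $\sigma=\gamma^{-1/2}r+E$ into $\rho(\sigma,\theta)=r$ gives $E=O_1(r^2)$. Implicit differentiation gives $\partial_r\sigma=1/\partial_s\rho(\sigma,\theta)=\gamma^{-1/2}+O(r)$.

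The same argument applies near $q$, using the strictly decreasing map $t\mapsto\rho(t,\theta)$ on $(0,t_0]$. This gives $\tau=\gamma^{-1/2}r^{-1}+O_1(r^{-2})$ and $\partial_r\tau=1/\partial_t\rho=-\gamma^{-1/2}r^{-2}+O(r^{-3})$.

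\textbf{Sublevel sets.} This is the main obstacle: showing that the level set consists only of the graph piece. Since $\rho$ is continuous and positive on the compact set $M\setminus(B_{s_0}(p)\cup B_{t_0}(q))$, it is bounded there between constants $m>0$ and $K<\infty$.

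For $r<m$, together with $r$ below the values of $\rho$ on $B_{t_0}(q)$ (which are large), we get $\{\rho\le r\}\subset B_{s_0}(p)$. Radial monotonicity then gives $\{\rho\le r\}=\{s\le\sigma\}$.

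For $r>K$, and $r$ above the values of $\rho$ on $B_{s_0}(p)$ (which are small), we get $\{\rho>r\}\subset B_{t_0}(q)\setminus\{q\}$. Radial monotonicity then gives $\{\rho>r\}=\{0<t<\tau\}$.

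Note that this step does not need \eqref{eq:assumption sec1}.
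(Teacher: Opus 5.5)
Your proposal is correct and follows the paper's proof. You Taylor-expand the Green function at its non-singular point, combine this with the pole expansion \eqref{eq:asymptotic of G}, take square roots, and invert the radially monotone $\rho$. Near $q$ the paper applies the implicit function theorem to $\rho^{-1}$, which is equivalent to your monotonicity of $t\mapsto\rho(t,\theta)$. Your compactness argument for the sublevel-set statement is a useful addition, since the paper asserts that part without proof.
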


\begin{proof}
Noting that $G_q$ is positive and smooth near $p$, the Taylor expansion in the normal coordinates, together with \eqref{eq:asymptotic of G}, gives
\begin{equation*}
G_q(s,\theta)=\gamma+B_p(\theta)s+O_2(s^2),\quad G_p^{-1/2}(s,\theta)=s(1+O_1(s^2))
\end{equation*}
for some constant $\gamma>0$ and some smooth function $B_p\in C^\infty(\S^3)$. Expanding $G_q^{1/2}$ and multiplying $G_p^{-1/2}$ proves \eqref{eq:rho near p}. We can obtain \eqref{eq:rho near q} by using the same argument.

Equation \eqref{eq:rho near p} implies $\partial_s\rho=\gamma^{1/2}+O(s)>0$ for all sufficiently small $s$. Then, the implicit function theorem yields the graph expression. Also, if we substitute the graph $s=\sigma(r,\theta)$ in \eqref{eq:rho near p} and differentiate in $r$, then we have the desired expansions of $\sigma$ and $\partial_r\sigma$. Similarly, equation \eqref{eq:rho near q} gives $\partial_t\rho^{-1}=\gamma^{1/2}+O(t)>0$ for small $t$, and applying the implicit function theorem to $\rho^{-1}$ gives the graph expression $t=\tau(r,\theta)$ and the desired expansions. 
\end{proof}

\begin{lemma}
With the notation of Lemma \ref{lem:graphical expression},
\begin{align}
\label{eq:volume element near p}
V|\hn\rho|_{\hg}^2\dv=\left(\gamma^{-2}|W(p)|^2_gs^3+O(s^4)\right)\,ds\,d\theta&\quad\text{as $s\rightarrow0$},\\\label{eq:volume element near q}
V|\hn\rho|_{\hg}^2\dv=\left(\gamma^{-1}|W(q)|^2_g t^5+O(t^6)\right)\,dt\,d\theta&\quad\text{as $t\rightarrow0$},
\end{align}
where $d\theta$ is the standard volume measure on $\S^3$.
\end{lemma}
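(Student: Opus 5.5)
The plan is to reduce everything to quantities of the original metric $g$, using how each factor transforms under $\hg=G_q^2g$, and then insert the expansions of Lemma \ref{lem:graphical expression}.

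\textbf{Step 1: express the density in terms of $g$.}
Three conformal-weight facts do the work.
\begin{itemize}
\item The $(1,3)$ Weyl tensor is conformally invariant. Hence as a $(0,4)$-tensor $\hw=G_q^2W$, and contracting with $\hg^{-1}$ four times gives $|\hw|_{\hg}=G_q^{-2}|W|_g$. Therefore $V=G_q^{-5}|W|_g^2$.
\item For a function, $|\hn\rho|_{\hg}^2=G_q^{-2}|\nabla\rho|_g^2$.
\item In dimension four, $\dv=G_q^4\dvol$.
\end{itemize}
Multiplying these gives the exact identity
\begin{equation*}
V|\hn\rho|_{\hg}^2\dv=G_q^{-3}|W|_g^2|\nabla\rho|_g^2\dvol .
\end{equation*}
Both expansions follow from this identity by local Taylor analysis in geodesic polar coordinates, where $\dvol=s^3(1+O(s^2))\,ds\,d\theta$ (and the same with $t$ near $q$).

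\textbf{Step 2: the expansion near $p$.}
Here $G_q$ is smooth, so $G_q^{-3}=\gamma^{-3}+O(s)$. Smoothness of $W$ gives $|W|_g^2=|W(p)|_g^2+O(s)$. From \eqref{eq:rho near p}, $\partial_s\rho=\gamma^{1/2}+O(s)$ and $s^{-1}\nabla_\theta\rho=O(s)$, so $|\nabla\rho|_g^2=\gamma+O(s)$. Multiplying the four factors gives
\begin{equation*}
\gamma^{-3}\cdot\gamma\cdot|W(p)|_g^2\,s^3+O(s^4)=\gamma^{-2}|W(p)|^2_gs^3+O(s^4),
\end{equation*}
which is \eqref{eq:volume element near p}.

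\textbf{Step 3: the expansion near $q$.}
By \eqref{eq:asymptotic of G} with pole $q$, $G_q=t^{-2}(1+O(t^2))$, so $G_q^{-3}=t^6(1+O(t^2))$. From \eqref{eq:rho near q}:
\begin{itemize}
\item the radial derivative is $\partial_t\rho=-\gamma^{-1/2}t^{-2}+O(1)$, using that the remainder is $O_1(t)$;
\item the angular part is $t^{-1}\nabla_\theta\rho=t^{-1}\nabla_\theta A_q+O(1)=O(t^{-1})$.
\end{itemize}
Hence
\begin{equation*}
|\nabla\rho|_g^2=\gamma^{-1}t^{-4}+O(t^{-2}).
\end{equation*}
Combining this with $|W|_g^2=|W(q)|^2_g+O(t)$ and $\dvol=t^3(1+O(t^2))\,dt\,d\theta$, the leading term is
\begin{equation*}
t^6\cdot\gamma^{-1}t^{-4}\cdot|W(q)|^2_g\cdot t^3=\gamma^{-1}|W(q)|_g^2t^5 .
\end{equation*}
The dominant error, of size $O(t^6)$, comes from the $O(t)$ variation of $|W|_g^2$. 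This gives \eqref{eq:volume element near q}.

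\textbf{Main obstacle.}
The delicate point is Step 3. There $|\nabla\rho|_g^2$ blows up like $t^{-4}$, so each relative error must be tracked precisely to keep the remainder at $O(t^6)$. This is exactly why \eqref{eq:rho near q} is needed in the $O_1$ sense, i.e.\ with control of the first derivatives of the remainder. One must also check that the angular contribution $\nabla_\theta A_q$ enters only at relative order $t^2$.

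A secondary point is the conformal weight of $|\hw|_{\hg}$. This is precisely where the choice of the weight $G_q^{-1}$ in $V$ makes the net power of $G_q$ equal to $-3$.
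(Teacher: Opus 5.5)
Your proposal is correct and follows the paper's proof. You use the same exact identity $V|\hn\rho|_{\hg}^2\dv=G_q^{-3}|W|_g^2|\nabla\rho|_g^2\dvol$, obtained from the conformal weights, and then Taylor-expand in geodesic polar coordinates at each pole. Your Step 3 spells out the expansion near $q$, including the check that the angular term is only of relative order $t^2$, which the paper leaves as ``similarly''.
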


\begin{proof}
Under $\hg=G_q^2g$ in dimension four, we have
\begin{equation*}
|\wh{W}|_{\hg}^2=G_q^{-4}|W|_g^2,\quad |\hn \rho|_{\hg}^2=G_q^{-2}|\nabla \rho|_g^2,\quad \dv=G_q^4\,dV_g,
\end{equation*}
and thus
\begin{align}\label{eq:lem2.4-1}
V|\hn\rho|_{\hg}^2\dv=G_q^{-3}|W|^2_g|\nabla\rho|_g^2\,dV_g.
\end{align}

In four-dimensional geodesic polar coordinates centered at either pole $p$ or $q$, we have
\begin{equation}\label{eq:asymptotic of vol}
dV_g=r^{3}(1+O_2(r^2))\,dr\,d\theta.
\end{equation}
Then, at $p$, equations \eqref{eq:rho near p} and \eqref{eq:asymptotic of vol} imply
\begin{equation*}
|\nabla\rho|_g^2=\gamma+O(s),\quad G_q^{-3}=\gamma^{-3}+O(s),\quad |W|^2_g=|W(p)|^2_g+O(s),
\end{equation*}
and substitution in \eqref{eq:lem2.4-1} proves \eqref{eq:volume element near p}. Similarly, we can prove \eqref{eq:volume element near q}.
\end{proof}

\begin{proposition}[Asymptotics of $F$ near $0$]\label{prop:asymptotic of F near 0}
As $r\rightarrow0$, we have
\begin{align}
    \label{eq:asymptotic of F near 0}F(r)&=\frac{\omega_3}{4}\gamma^{-4}|W(p)|^2_g\,r^4+O(r^5),\\
    \label{eq:asymptotic of F' near 0}F'(r)&=\omega_3\gamma^{-4}|W(p)|_g^2\,r^3+O(r^4).
\end{align}
In particular, $F(r)=O(r^4)$ and $F'(r)=O(r^3)$ as $r\rightarrow0$.
\end{proposition}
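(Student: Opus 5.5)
For small $r$, Lemma \ref{lem:graphical expression} gives $\sls=\{s\le\sigma(r,\theta)\}$, so I will write $F(r)$ in geodesic polar coordinates about $p$ and integrate the density \eqref{eq:volume element near p} radially up to the graph $s=\sigma(r,\theta)$:
\begin{equation*}
F(r)=\int_{\S^3}\int_0^{\sigma(r,\theta)}\left(\gamma^{-2}|W(p)|_g^2 s^3+O(s^4)\right)ds\,d\theta
=\int_{\S^3}\left(\frac{\gamma^{-2}}{4}|W(p)|_g^2\,\sigma(r,\theta)^4+O(\sigma^5)\right)d\theta .
\end{equation*}
Next I substitute $\sigma(r,\theta)=\gamma^{-1/2}r+O(r^2)$, so that $\sigma^4=\gamma^{-2}r^4+O(r^5)$ uniformly in $\theta$. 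Integrating over $\S^3$ then gives $F(r)=\frac{\omega_3}{4}\gamma^{-4}|W(p)|_g^2r^4+O(r^5)$, which is \eqref{eq:asymptotic of F near 0}.

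For $F'$, I will not differentiate the asymptotic expansion, since an $O(r^5)$ remainder need not have derivative $O(r^4)$. Instead I differentiate the integral representation exactly by Leibniz's rule, which is legitimate because $\sigma$ is $C^1$ in $r$ by the implicit function theorem. Write $h(s,\theta)$ for the density in \eqref{eq:volume element near p}. Then
\begin{equation*}
F'(r)=\int_{\S^3}h(\sigma(r,\theta),\theta)\,\partial_r\sigma(r,\theta)\,d\theta .
\end{equation*}
Using $h(s,\theta)=\gamma^{-2}|W(p)|^2_gs^3+O(s^4)$, together with $\sigma^3=\gamma^{-3/2}r^3+O(r^4)$ and $\partial_r\sigma=\gamma^{-1/2}+O(r)$, the integrand is $\gamma^{-4}|W(p)|_g^2r^3+O(r^4)$ uniformly in $\theta$. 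Integrating gives \eqref{eq:asymptotic of F' near 0}.

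An alternative route to the same formula is the coarea formula, since $F'(r)=\int_{\ls}V|\hn\rho|_{\hg}\,d\hat A$ because the level is regular. The polar-graph computation is cleaner, though. The "in particular" statements then follow immediately.

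The main point to check is uniformity. The $O(s^4)$ in \eqref{eq:volume element near p} must be uniform in $\theta$ as an honest pointwise bound on the density. This holds because the density is a product of smooth quantities: $G_q^{-3}$, $|W|_g^2$, the factor from $dV_g$, and $|\nabla\rho|_g^2$, whose expansion is controlled by the $O_1$ term in \eqref{eq:rho near p}. The remaining concern is that $\{\rho\le r\}$ near $p$ is exactly the star-shaped region described in Lemma \ref{lem:graphical expression}. In particular, no other component of $\{\rho\le r\}$ may exist away from $p$. This follows because $\rho$ is continuous and positive on the compact set $\{s\ge s_0\}\cap\{t\ge t_0\}$, and $\rho\to\infty$ near $q$, so $\rho$ is bounded below away from a neighborhood of $p$. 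Hence for $r$ small, $\sls$ lies in the small ball around $p$, where the graph description applies.
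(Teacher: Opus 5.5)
Your proposal is correct and follows the paper's proof essentially line by line. Like the paper, you integrate the density \eqref{eq:volume element near p} radially up to the graph $s=\sigma(r,\theta)$ and substitute $\sigma=\gamma^{-1/2}r+O(r^2)$, then obtain $F'$ by differentiating the integral representation with $\partial_r\sigma=\gamma^{-1/2}+O(r)$ rather than the expansion. Your extra argument that $\{\rho\le r\}$ has no component away from $p$ for small $r$ supplies a justification the paper only asserts in Lemma \ref{lem:graphical expression}.
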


\begin{proof}
For small $r$, Lemma \ref{lem:graphical expression} and \eqref{eq:volume element near p} give
\begin{equation*}
F(r)=\int_{\S^3}\int_0^{\sigma(r,\theta)}\left(\gamma^{-2}|W(p)|^2_g\,s^3+O(s^4)\right)\,ds\,d\theta,
\end{equation*}
where the remainder is uniform in $\theta$. Since $\sigma(r,\theta)=\gamma^{-1/2}r+O(r^2)$, we have
\begin{align*}
F(r)&=\int_{\S^3}\left(\frac{\gamma^{-2}|W(p)|_g^2}{4}\sigma(r,\theta)^4+O(\sigma(r,\theta)^5)\right)\,d\theta\\
&=\frac{\omega_3}{4}\gamma^{-2}|W(p)|^2_g\left(\gamma^{-2}r^4+O(r^5)\right)+O(r^5),
\end{align*}
which is \eqref{eq:asymptotic of F near 0}. 

On the other hand, differentiating $F$ in $r$ yields that
\begin{align*}
F'(r)&=\int_{\S^3}\left(\gamma^{-2}|W(p)|^2_g\sigma^3+O(\sigma^4)\right)\partial_r\sigma \,d\theta\\
&=\omega_3\gamma^{-2}|W(p)|^2_g\left(\gamma^{-3/2}r^3+O(r^4)\right)\left(\gamma^{-1/2}+O(r)\right)+O(r^4)\\
&=\omega_3\gamma^{-4}|W(p)|_g^2r^3+O(r^4),
\end{align*}
which proves \eqref{eq:asymptotic of F' near 0}.
\end{proof}

\begin{proposition}[Asymptotics of $F$ as $r\rightarrow\infty$]\label{prop:asymptotic of F at infinity}
The integral
\begin{equation*}
F_\infty:=\int_{\hm}V|\hn\rho|_{\hg}^2\dv
\end{equation*}
is finite, and $F(r)\rightarrow F_\infty$ as $r\rightarrow\infty$. More precisely,
\begin{equation}
\label{eq:asymptotic of F at infinity}
F_\infty-F(r)=\frac{\omega_3}{6}\gamma^{-4}|W(q)|_g^2\,r^{-6}+O(r^{-7})\quad\text{as } r\rightarrow\infty.
\end{equation}
In particular, $F_\infty-F(r)=O(r^{-6})$ as $r\rightarrow\infty$. 
\end{proposition}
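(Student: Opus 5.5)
We follow the proof of Proposition \ref{prop:asymptotic of F near 0}, now working near the pole $q$ in geodesic polar coordinates $(t,\theta)$ centered at $q$, with $t=d_g(q,\cdot)$.

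\emph{Finiteness of $F_\infty$.} Fix $r_0$ large enough that Lemma \ref{lem:graphical expression} applies for all $r\ge r_0$. Split
\begin{equation*}
\hm=\{\rho\le r_0\}\cup\{\rho>r_0\}.
\end{equation*}
The first piece is compact in $\hm$, so it contributes finitely. On the second piece we have $\{\rho>r_0\}=\{0<t<\tau(r_0,\theta)\}$. By \eqref{eq:volume element near q} the integrand there equals $(\gamma^{-1}|W(q)|_g^2t^5+O(t^6))\,dt\,d\theta$, which is integrable near $t=0$. Hence $F_\infty<\infty$. Since $F(r)$ increases to $F_\infty$ (monotone convergence, the sublevel sets exhausting $\hm$), we get $F(r)\to F_\infty$.

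\emph{Expansion of the tail.} For $r\ge r_0$, Lemma \ref{lem:graphical expression} gives
\begin{equation*}
F_\infty-F(r)=\int_{\{\rho>r\}}V|\hn\rho|^2_{\hg}\dv=\int_{\S^3}\int_0^{\tau(r,\theta)}\left(\gamma^{-1}|W(q)|^2_gt^5+O(t^6)\right)dt\,d\theta .
\end{equation*}
The remainder is uniform in $\theta$, so the inner integral equals
\begin{equation*}
\frac{\gamma^{-1}}{6}|W(q)|_g^2\,\tau^6+O(\tau^7).
\end{equation*}
Substituting $\tau=\gamma^{-1/2}r^{-1}+O(r^{-2})$ yields $\tau^6=\gamma^{-3}r^{-6}+O(r^{-7})$ and $\tau^7=O(r^{-7})$. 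Integrating over $\S^3$ then produces
\begin{equation*}
\frac{\omega_3}{6}\gamma^{-4}|W(q)|_g^2\,r^{-6}+O(r^{-7}),
\end{equation*}
which is \eqref{eq:asymptotic of F at infinity}.

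\emph{Main obstacle.} The one delicate point is the exponent $t^5$ in \eqref{eq:volume element near q}, which comes from the blow-up behaviour of $G_q$ and $\rho$ near $q$, together with the uniformity of the remainders. These are supplied by the previous lemma and by Lemma \ref{lem:graphical expression}, so we take them as given. The only care needed is to check that the $O(t^6)$ term stays $O(r^{-7})$ after integrating up to $\tau\sim r^{-1}$, which is immediate.
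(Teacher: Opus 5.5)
Your proposal is correct and follows the paper's proof essentially verbatim. It gets finiteness from the $O(t^5)\,dt\,d\theta$ form of the integrand near $q$ and $F(r)\to F_\infty$ by monotone convergence. It then integrates \eqref{eq:volume element near q} up to the graph $t=\tau(r,\theta)$ of Lemma \ref{lem:graphical expression} and substitutes $\tau=\gamma^{-1/2}r^{-1}+O(r^{-2})$. The only harmless difference is that finiteness on $\{\rho\le r_0\}$ via compactness tacitly uses that the integrand stays bounded at $p$, where it is $O(s^3)\,ds\,d\theta$ by \eqref{eq:volume element near p}; the paper invokes that expansion explicitly.
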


\begin{proof}
The volume element has the forms $O(s^3)ds\,d\theta$ near $p$ by \eqref{eq:volume element near p} and $O(t^5)\,dt\,d\theta$ near $q$ by \eqref{eq:volume element near q}. Since it is smooth on each compact subset of $M\setminus\{p,q\}$, it is therefore integrable on $\hm$, and $F_\infty<\infty$. The monotone convergence theorem implies $F(r)\rightarrow F_\infty$ as $r\rightarrow\infty$.

For sufficiently large $r$, Lemma \ref{lem:graphical expression} and \eqref{eq:volume element near q} give
\begin{align*}
F_\infty-F(r)&=\int_{\{\rho>r\}}V|\hn\rho|_{\hg}^2\dv\\
&=\int_{\S^3}\int_0^{\tau(r,\theta)}\left(\gamma^{-1}|W(q)|^2_g\,t^5+O(t^6)\right)\,dt\,d\theta.
\end{align*}
Noting $\tau(r,\theta)=\gamma^{-1/2}r^{-1}+O(r^{-2})$, we have
\begin{align*}
F_\infty-F(r)&=\int_{\S^3}\left(\frac{\gamma^{-1}|W(q)|_g^2}{6}\tau(r,\theta)^6+O(\tau(r,\theta)^7)\right)\,d\theta\\
&=\frac{\omega_3}{6}\gamma^{-1}|W(q)|^2_g\left(\gamma^{-3}r^{-6}+O(r^{-7})\right)+O(r^{-7})\\
&=\frac{\omega_3}{6}\gamma^{-4}|W(q)|^2_g r^{-6}+O(r^{-7}),
\end{align*}
which proves \eqref{eq:asymptotic of F at infinity}.
\end{proof}

We now prove Theorem \ref{thm1} using the ODE argument used in \cite{LP25}.

\begin{proof}[Proof of Theorem \ref{thm1}] We recall that
\begin{equation*}
|\hn\rho|_{\hg}\ge\delta\quad\text{on $M\setminus\{p,q\}$}\quad \text{and}\quad \sup_{\hm}\rho^2|\hw|_{\hg}\le c\delta^2
\end{equation*} by assumption. The coarea formula gives 
\begin{equation*}
F'(r)=\int_{\ls}V|\hn\rho|_{\hg}\da.
\end{equation*}
Using the divergence theorem and the first equation in \eqref{eq:lem2.1}, we have
\begin{equation*}
F'(r)=\int_{\sls}\left(\langle\hn V,\hn\rho\rangle+3\rho^{-1}V|\hn\rho|^2_{\hg}\right)\dv.
\end{equation*}
Differentiating by the coarea formula yields 
\begin{equation*}
F''(r)-\frac{3}{r}F'(r)=\int_{\ls}\partial_\nu V\da=\int_{\sls}\hd V\dv.
\end{equation*}
Next, integration by parts and the second equation in \eqref{eq:lem2.1} gives
\begin{align*}
\int_{\sls}\rho^2\hd V\dv&=r^2\int_{\ls}\partial_\nu V\da-\int_{\sls}\langle\hn\rho^2, \hn V\rangle\dv\\
&=r^2\left(F''-\frac{3}{r}F'\right)-2rF'+8F.
\end{align*}
Hence we have
\begin{equation*}
r^2F''(r)-5rF'(r)+8F(r)=\int_{\sls}\rho^2\hd V\dv.
\end{equation*}
Let 
\begin{equation*}
\omega=\sup_{\hm}\rho^2|\hw|_{\hg}.
\end{equation*}
Lemma \ref{lem:Kato}, together with the assumption \eqref{eq:assumption sec1}, yields that 
\begin{align*}
r^2F''-5rF'+8F&\ge-C\int_{\sls}\rho^2|\hw|_{\hg}V\dv\\&\ge -C\omega\int_{\sls}V\dv\\
&\ge-C\delta^{-2}\omega F(r).
\end{align*}
If we let $\Theta=C\delta^{-2}\omega$, then
\begin{equation}\label{eq:ODE}
r^2F''-5rF'+(8+\Theta)F\ge0.
\end{equation}

We now choose $c>0$ so that $0\le \Theta<1$; since $\Theta=C\delta^{-2}\omega$ and $\omega \le c\delta^2$ by hypothesis, it suffices to take $c=1/(2C)$, where $C$ is the universal constant of Lemma \ref{lem:Kato}. If $\Theta=0$, then $\omega=0$ and $\wh{W}\equiv0$, as desired. Thus, we assume $0<\Theta<1$. Let $x\ge y$ be the roots of the homogeneous indicial equation
\begin{equation*}
\lambda^2-6\lambda+(8+\Theta)=0,
\end{equation*}
that is,
\begin{equation*}
x=3+\sqrt{1-\Theta},\qquad y=3-\sqrt{1-\Theta},
\end{equation*}
and $2<y<3<x<4$. If we define
\begin{align*}
h(r)=\frac{F(r)}{r^x},
\end{align*}
then the inequality \eqref{eq:ODE} implies
\begin{equation}\label{eq:ODE2}
\left(\frac{h'}{r^{y-x-1}}\right)'=\left(\frac{rF'-xF}{r^y}\right)'=r^{-y-1}(r^2F''-5rF'+(8+\Theta)F)\ge0.
\end{equation}
By Proposition \ref{prop:asymptotic of F near 0}, we have
\begin{equation*}
\frac{rF'-xF}{r^y}=O(r^{4-y})\rightarrow0\quad\text{as } r\rightarrow0.
\end{equation*}
It then follows from \eqref{eq:ODE2} that $h'(r)\ge0$ for every $r>0$. On the other hand, Proposition \ref{prop:asymptotic of F at infinity} gives 
\begin{align*}
h(r)=\frac{F(r)}{r^x}\rightarrow0\quad\text{as } r\rightarrow\infty. 
\end{align*}
Since $h$ is non-negative and non-decreasing, it must hold that $h\equiv 0$, and hence $F\equiv 0$. By the lower bound \eqref{eq:assumption sec1} and the definition of $V$, we obtain $\hw\equiv0$ on $\hm$.

The conformal invariance of the Weyl tensor implies $W_g\equiv 0$. Since a locally conformally flat Einstein metric has constant sectional curvature, $(M,g)$ has constant sectional curvature 1. Then, by the Killing--Hopf theorem, $(M,g)$ is isometric to a spherical space form $\S^4/\Gamma$ for some finite group $\Gamma\subset \mathrm{O}(5)$ acting freely on $\S^4$.

In dimension four, the spherical space forms are $\S^4$ or $\mathbb{RP}^4$. To exclude the latter case, we claim that $|\hn\rho|_{\hg}=0$ at some points in the blow-up of $\mathbb{RP}^4$. Denote the covering map by $\pi:\S^4\rightarrow\mathbb{RP}^4$. Fix two distinct points $p,q\in\S^4$ and let $[p]=\pi(p),[q]=\pi(q)\in\RP^4$. Then we have
\begin{equation*}
\pi^\ast G_{[p]}^{\RP^4}=G_{p}^{\S^4}+G_{-p}^{\S^4}.
\end{equation*}
Since we know that $G^{\S^4}_p=(2\sin(d(p,\cdot)/2))^{-2}$ and $d(-p,\cdot)=\pi-d(p,\cdot)$, we have
\begin{equation*}
\pi^\ast G_{[p]}^{\RP^4}=\frac{1}{4\sin^2(d(p,\cdot)/2)}+\frac{1}{4\cos^2(d(p,\cdot)/2)}=\frac{1}{\sin^2d(p,\cdot)}.
\end{equation*}
Thus if we set $u=\langle x,p\rangle, v=\langle x,q\rangle$ on $\S^4\subset\R^5$, then $u^2,v^2$ have the same values at antipodal points, so are well-defined on $\RP^4$. Since we have $G_{[p]}^{\RP^4}=\frac{1}{1-u^2}$, it gives
\begin{equation*}
\rho^2=\frac{G_q}{G_p}=\frac{1-u^2}{1-v^2}.
\end{equation*}

Since $G_{[p]}^{\RP^4}=\frac{1}{1-u^2}$ has minimum on the cut locus $\mathrm{Cut}(p)=\{u=0\}\cong\RP^3$, where $d(p,\cdot)=\pi/2$, $\nabla G_{[p]}^{\RP^4}=0$ there. For the same reason, $\nabla G_{[q]}^{\RP^4}=0$ on $\mathrm{Cut}(q)=\{v=0\}$. Hence, we have $\nabla G_{[p]}^{\RP^4}=\nabla G_{[q]}^{\RP^4}=0$ on $\mathrm{Cut}(p)\cap\mathrm{Cut}(q)\cong\RP^2$, $\nabla\rho=0$ and $\hn\rho=0$ there. Therefore, we deduce $\Gamma$ is trivial and $(M,g)$ is isometric to $\S^4$.  
\end{proof}

\begin{remark}
Neither the maximality of $b_p$ at $q$ nor any relation between $p$ and $q$, except $p\neq q$, is used in this section. These enter only in the verification of the hypotheses in Theorem \ref{thm1} under the smallness of the integral gap to prove the gap theorem, where $q$ is chosen to be a maximum point of $b_p$, so that, in the limit, $p$ and $q$ become antipodal on the round sphere.
\end{remark}

\section{Integral Gap and Almost-Maximal Volume}
In this section we prove Theorem \ref{thm2} and deduce Corollaries \ref{cor3} and \ref{thm3}. Let $(M^n,g)$ be a closed connected Riemannian manifold of dimension $n\ge 3$ with $\ric\ge(n-1)g$. Fix $p\in M$, and let $G=G_p$ and $b=b_p$ be given by \eqref{eq:green-function-normalization}-\eqref{eq:function b}. Throughout this section, we write
\begin{equation*}
    s(r)=2\sin(r/2),\qquad c(r)=\cos(r/2),
\end{equation*}
so that the definition \eqref{eq:function b} reads
\begin{equation*}
G=s(b)^{2-n}.
\end{equation*}
Integrating the normalization \eqref{eq:green-function-normalization} over $M$ gives the identity
\begin{equation}\label{eq:integral of G}
\frac{n}{4}\int_M G\,dV_g=\omega_{n-1}.
\end{equation}

Set $m=\max_M b$. By \cite[Theorem 1.1]{Man25} we have $m\le \pi$, and if $m=\pi$, then $(M,g)$ is isometric to the round sphere, in which case Theorem \ref{thm2}, Corollaries \ref{cor3} and \ref{thm3} are trivial. We therefore assume that $m<\pi$ for the remainder of this section.

For a regular value $r\in(0,m)$ of $b$, we define 
\begin{equation}\label{eq:JSH}
J(r)=\frac{n}{4}\int_{\bsls}G\dvol,\quad S(r)=\int_{\bls}|\nabla b|\darea,\quad H(r)=\int_{\bls}\frac{1-|\nabla b|^2}{|\nabla b|}\darea,
\end{equation}
and extend the domain of definition of $S$ and $H$ to all of $(0,m)$ by assigning arbitrary values at critical values of $b$. By the gradient estimate \eqref{eq:manea}, $S,H\ge0$. Also, the function $J$ is nondecreasing with $J(0^+)=0$ and, by \eqref{eq:integral of G}, $J(m)=\omega_{n-1}$.

\subsection{Level set identities}
We first record that the critical set of $b$ is negligible.

\begin{lemma}\label{lem:critical set of b}
We have
\begin{equation*}
\mathcal H^n\left(\{x\in M\setminus\{p\}\,:\,|\nabla b|(x)=0\}\right)=0.
\end{equation*}
\end{lemma}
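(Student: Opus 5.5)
The plan is to reduce the statement to the analogous fact for $G$ itself, and then to use unique continuation for solutions of the elliptic equation $LG=0$ on $M\setminus\{p\}$. Since $b=2\arcsin\bigl(\tfrac12 G^{\frac{1}{2-n}}\bigr)$ and $G$ is smooth and positive on $M\setminus\{p\}$, the chain rule gives
\begin{equation*}
\nabla b=\phi'(G)\nabla G,\qquad \phi(t)=2\arcsin\Bigl(\tfrac12 t^{\frac{1}{2-n}}\Bigr).
\end{equation*}
Here $\phi'(t)\neq0$ wherever $\tfrac12 t^{\frac{1}{2-n}}<1$. At points with $G^{\frac{1}{2-n}}=2$, i.e.\ $b=\pi$, the derivative of $\arcsin$ blows up. But we are assuming $m<\pi$, so that set is empty, and in any case $\{b=\pi\}$ could be handled separately. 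Hence $\{|\nabla b|=0\}=\{|\nabla G|=0\}\cap(M\setminus\{p\})$, and it suffices to show that the critical set of $G$ away from $p$ has measure zero.

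For the critical set of $G$, I would use the fact that $u=G$ solves $\Delta u=\tfrac{n(n-2)}{4}u$ on the connected open set $M\setminus\{p\}$ (connected because $n\ge3$). The key step is the classical structure result for critical sets of solutions to second-order elliptic equations with smooth coefficients. For any nonconstant solution $u$ of $\Delta u=cu$, the critical set $\{\nabla u=0\}$ has Hausdorff dimension at most $n-2$ (Hardt--Hoffmann-Ostenhof--Hoffmann-Ostenhof--Nadirashvili, Han--Hardt--Lin). In particular it has $\mathcal H^n$-measure zero. The function $G$ is nonconstant because it blows up at $p$, so the result applies. A more elementary route avoids this citation. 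Suppose $E=\{\nabla G=0\}$ had positive measure. Almost every point of $E$ is a Lebesgue density point of $E$, and at such points all second derivatives of $G$ vanish (since $\nabla G\equiv0$ on $E$, the derivatives of $\nabla G$ along $E$ vanish at density points). In particular $\Delta G=0$ there. The equation then forces $\tfrac{n(n-2)}{4}G=0$, contradicting $G>0$. This elementary argument already closes the proof.

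Concretely, I would use the standard fact that for a $C^1$ map $F=\nabla G$, the differential $DF$ vanishes a.e.\ on $\{F=0\}$. This holds because at a density point $x$ of the zero set, directional difference quotients along sequences in the set approximate every direction. So $\hess G=0$ a.e.\ on $E$, whence $\Delta G=0$ a.e.\ on $E$. But $\Delta G=\tfrac{n(n-2)}{4}G>0$ everywhere on $M\setminus\{p\}$, so $\mathcal H^n(E)=0$. The only point needing care, and the main obstacle, is justifying the Hessian-vanishing claim at density points. I would prove it in local coordinates: if $DF(x)v\neq0$ for some unit $v$, then by continuity of $DF$, for $y\in E$ near $x$ we get $|F(y)|\ge c|(y-x)\cdot v|-o(|y-x|)$. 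This forces $E$ near $x$ into a thin cone around the hyperplane $v^\perp$, contradicting density one at $x$.
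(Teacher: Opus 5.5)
Your proof is correct, and for the critical set of $G$ it takes a different route from the paper. The reduction from $b$ to $G$ is exactly the paper's: chain rule, with $\phi'(G)\neq 0$ because $b\le m<\pi$. The paper then cites critical-set volume estimates for linear elliptic equations (Naber--Valtorta). You instead use the sign structure of this particular equation: at density points of $E=\{\nabla G=0\}$ the coordinate Hessian of $G$ vanishes, so $\Delta G=0$ there, which contradicts $\Delta G=\frac{n(n-2)}{4}G>0$. Your argument is self-contained and uses only $G>0$ and the positive zeroth-order coefficient. The paper's is a one-line appeal to heavy machinery. Your idea also gives an even shorter proof. Since $\Delta G>0$ everywhere, $\hess G\neq 0$ at every critical point. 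So in local coordinates some $\partial_k G$ has nonvanishing gradient there, and near that point $E$ lies in the smooth hypersurface $\{\partial_k G=0\}$. Countably many such pieces cover $E$, and no density points are needed.

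There are two corrections to make.

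First, drop the citation route. The claim that the critical set of a nonconstant solution of $\Delta u=cu$ has Hausdorff dimension at most $n-2$ is false when $c\neq 0$. For example, $u=\cosh(\sqrt{c}\,x_1)$ is critical along a whole hyperplane. Closer to this paper: on $\RP^4$ the Green function $G_{[p]}$ has $\nabla G_{[p]}=0$ on all of $\mathrm{Cut}(p)\cong\RP^3$, as computed at the end of Section 2. Codimension-two bounds for the full critical set hold for operators without a zeroth-order term. With such a term one only controls the singular set $\{u=0=|\nabla u|\}$, which is empty here because $G>0$. So your elementary argument is what actually proves the lemma. The same caveat applies if the paper's citation is read literally.

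Second, in the density-point step, the inequality $|DF(x)w|\ge c\,|w\cdot v|$ does not hold for an arbitrary unit $v$ with $DF(x)v\neq 0$. For instance, it fails when $DF(x)$ has rank one and $v$ is not orthogonal to its kernel. Fix it by taking $v\in(\ker DF(x))^{\perp}$, for example an eigenvector of the symmetric matrix $\hess G(x)$ with nonzero eigenvalue. Alternatively, say directly that $E$ near $x$ lies in a thin cone around the proper subspace $\ker DF(x)$. Only differentiability of $F$ at $x$ is used there, not continuity of $DF$.
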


\begin{proof}
On every relatively compact coordinate ball in $M\setminus\{p\}$, the function $G$ is a nonconstant solution of the homogeneous uniformly elliptic equation $LG=0$. The volume estimate for critical sets of linear elliptic equations then implies that $\{|\nabla G|=0\}$ has zero $n$-dimensional measure \cite{NaberValtorta2017}. Since $m<\pi$, the function relating $b$ to $G$, given by $x \mapsto s(x)^{2-n}$, has nonvanishing derivative on $M\setminus\{p\}$; hence $b$ and $G$ have the same critical set there.
\end{proof}

\begin{lemma}\label{lem:coarea}
For every nonnegative Borel function $\varphi:(0,m)\rightarrow\R$,
\begin{equation}\label{eq:3.3}
\begin{aligned}
&\int_0^m \varphi(r)(S(r)+H(r))\,dr=\int_M\varphi(b)\,dV_g, \\
&\int_0^m\varphi(r) H(r)\,dr=\int_M\varphi(b)(1-|\nabla b|^2)\,dV_g.
\end{aligned}
\end{equation}
In particular, functions $J$ and $r\mapsto \vol(\{b\le r\})$ are absolutely continuous on $[0,m]$, and for a.e. $r\in(0,m)$,
\begin{equation}\label{eq:3.4}
J'(r)=\frac{n}{4}s(r)^{2-n}(S(r)+H(r)),\quad \frac{d}{dr}\vol(\bsls)=S(r)+H(r).
\end{equation}
\end{lemma}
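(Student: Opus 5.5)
The plan is to apply the smooth coarea formula to $b$ on the open set $U:=\{x\in M\setminus\{p\}: |\nabla b|(x)>0\}$, and then use Lemma \ref{lem:critical set of b} to replace $U$ by $M$ in every volume integral. Since $b$ is smooth on $M\setminus\{p\}$, the coarea formula gives, for every nonnegative Borel $\psi$ on $U$,
\begin{equation*}
\int_U \psi\,|\nabla b|\,dV_g=\int_0^m\int_{\{b=r\}\cap U}\psi\,dA_g\,dr .
\end{equation*}
By Sard's theorem almost every $r\in(0,m)$ is a regular value of $b$. For such $r$, the level set $\{b=r\}$ lies in $U$, so its intersection with $U$ is all of $\{b=r\}$. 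The arbitrary values assigned to $S$ and $H$ at critical values therefore do not matter after integrating in $r$.

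First take $\psi=\varphi(b)/|\nabla b|$. The right-hand side becomes
\begin{equation*}
\int_0^m\varphi(r)\int_{\{b=r\}}|\nabla b|^{-1}\,dA_g\,dr .
\end{equation*}
On each regular level, pointwise,
\begin{equation*}
|\nabla b|^{-1}=|\nabla b|+\frac{1-|\nabla b|^2}{|\nabla b|},
\end{equation*}
so the inner integral equals $S(r)+H(r)$. The left-hand side is $\int_U\varphi(b)\,dV_g$. This equals $\int_M\varphi(b)\,dV_g$, because $M\setminus U$ consists of the point $p$ together with a set of measure zero by Lemma \ref{lem:critical set of b}. This proves the first identity in \eqref{eq:3.3}. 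Next take $\psi=\varphi(b)(1-|\nabla b|^2)/|\nabla b|$, which is nonnegative by \eqref{eq:manea}. The same argument gives the second identity.

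For the absolute continuity, apply the first identity with $\varphi=\mathbf 1_{(0,r]}$. This gives
\begin{equation*}
\vol(\{b\le r\})=\int_0^r (S+H)\,dr',
\end{equation*}
again up to the null set $\{p\}$. Taking $r=m$ shows that $S+H\in L^1(0,m)$. Hence $r\mapsto\vol(\{b\le r\})$ is absolutely continuous, and by Lebesgue differentiation its derivative equals $S+H$ almost everywhere.

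For $J$, apply the first identity with $\varphi(r')=\frac{n}{4}s(r')^{2-n}\mathbf 1_{(0,r]}(r')$. Since $G=s(b)^{2-n}$, this gives
\begin{equation*}
J(r)=\int_0^r \tfrac{n}{4}s(r')^{2-n}(S+H)\,dr'.
\end{equation*}
The integrand is nonnegative, and its total integral is $J(m)=\omega_{n-1}<\infty$ by \eqref{eq:integral of G}. So it lies in $L^1$, $J$ is absolutely continuous, and \eqref{eq:3.4} follows.

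The only delicate point is the singularity at $p$ and the possible critical set. Working on $U$ avoids the singularity, and the weight $s^{2-n}$, which blows up as $r\to 0$, is handled by integrability coming from \eqref{eq:integral of G} rather than by any boundedness.
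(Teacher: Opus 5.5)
Your proposal is correct and follows essentially the same argument as the paper. It applies the coarea formula on the noncritical set $\{|\nabla b|>0\}$ and uses Lemma \ref{lem:critical set of b} to discard the critical set; that set also contains the maximum level $\{b=m\}$, which the paper notes explicitly and you use implicitly when writing $M\setminus U$ as $\{p\}$ plus a null set. It then uses Sard's theorem to reduce to regular levels, and the same test functions $\mathbf 1_{(0,r]}$ and $\frac{n}{4}s^{2-n}\mathbf 1_{(0,r]}$, with integrability coming from $\vol(M)<\infty$ and \eqref{eq:integral of G}.
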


\begin{proof}
The function $b$ is smooth on $M\setminus\{p\}$ with $0<b\le m$ there, and the level set $\{b=m\}$ consists of maximum points of $b$ and is hence contained in the critical set. Since the critical set has measure zero by Lemma \ref{lem:critical set of b}, the coarea formula gives, for every nonnegative Borel function $\psi$ on $M\setminus\{p\}$, 
\begin{equation}\label{eq:lem3.2-1}
\int_M\psi\dvol=\int_{\{|\nabla b|>0\}}\psi \dvol=\int_0^m\int_{\bls}\frac{\psi}{|\nabla b|}\darea\,dr.
\end{equation}
By Sard's theorem, a.e. $r\in(0,m)$ is a regular value of $b$, and at every regular value,
\begin{equation*}
\int_{\bls}\frac{\varphi(b)}{|\nabla b|}\darea=\varphi(r)\int_{\bls}\frac{|\nabla b|^2+(1-|\nabla b|^2)}{|\nabla b|}\darea=\varphi(r)(S(r)+H(r)).
\end{equation*}
Thus the choice $\psi=\varphi(b)$ in \eqref{eq:lem3.2-1} proves the first equation in \eqref{eq:3.3}.

Taking $\varphi=\frac{n}{4}s^{2-n}1_{(0,r]}$ and $\varphi=1_{(0,r]}$ in the first identity of \eqref{eq:3.3} and using $G=s(b)^{2-n}$, we obtain
\begin{equation*}
J(r)=\frac{n}{4}\int_0^r s^{2-n}(S+H)\,dt,\qquad \vol(\bsls)=\int_0^r(S+H)\,dt
\end{equation*}
for every $r\in(0,m)$. Since their values approach $\omega_{n-1}$ and $\vol(M)$ as $r \to m$, respectively, both integrands belong to $L^1(0,m)$, and the absolute continuity and \eqref{eq:3.4} therefore follow. 
\end{proof}

\begin{lemma}[cf. {\cite[(4.6)]{Man25}}]\label{eq:lemma 3.3}
For every regular value $r\in(0,m)$ of $b$,
\begin{equation}\label{eq:3.8}
S(r)=s(r)^{n-1}c(r)^{-1}(\omega_{n-1}-J(r)).
\end{equation}
\end{lemma}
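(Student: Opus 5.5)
We will prove \eqref{eq:3.8} by integrating the Green function equation over the superlevel set $\{b\ge r\}$ and applying the divergence theorem. This region avoids the pole, so no singular boundary term appears.

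Fix a regular value $r\in(0,m)$ of $b$. Since $b(p)=0<r$, the set $\Omega_r:=\{b\ge r\}$ is a compact subset of $M\setminus\{p\}$. It has smooth boundary $\{b=r\}$, with outward unit normal $\nu=-\nabla b/|\nabla b|$. On $\Omega_r$ we have $LG=0$, that is,
\begin{equation*}
\Delta G=\frac{n(n-2)}{4}G\quad\text{on }\Omega_r .
\end{equation*}

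The volume term is handled as follows. The level set $\{b=r\}$ has measure zero, since it is a smooth hypersurface at a regular value. Hence, by \eqref{eq:integral of G} and the definition of $J$ in \eqref{eq:JSH},
\begin{equation*}
\int_{\Omega_r}\frac{n(n-2)}{4}G\dvol=(n-2)\Big(\omega_{n-1}-J(r)\Big).
\end{equation*}

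For the boundary term, write $G=s(b)^{2-n}$ and use $s'(r)=\cos(r/2)=c(r)$. This gives
\begin{equation*}
\nabla G=(2-n)s(b)^{1-n}c(b)\nabla b .
\end{equation*}
Consequently, on $\{b=r\}$,
\begin{equation*}
\langle\nabla G,\nu\rangle=(n-2)s(r)^{1-n}c(r)|\nabla b|.
\end{equation*}
The divergence theorem then yields
\begin{equation*}
\int_{\Omega_r}\Delta G\dvol=(n-2)s(r)^{1-n}c(r)S(r).
\end{equation*}

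Equating the two integrals and dividing by $(n-2)s(r)^{1-n}c(r)$ gives \eqref{eq:3.8}. This division is legitimate because $0<r<m<\pi$ implies $s(r)>0$ and $c(r)>0$; this is the one place where the standing assumption $m<\pi$ is used.

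There is no substantial obstacle in this argument. The only points needing care are the sign of the outward normal on $\{b\ge r\}$ and the fact that the level set is null, so that integrating over $\{b\le r\}$ or $\{b<r\}$ makes no difference in $J$.
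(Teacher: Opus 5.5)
Your argument is correct, but it works on the opposite side of the level set from the paper's proof. The paper integrates $\Delta G=\frac{n(n-2)}{4}G$ over the annulus $\{\epsilon\le b\le r\}$, so it has to handle an inner boundary. It uses the pole expansion to see that small $\epsilon$ are regular values and that the flux through $\{b=\epsilon\}$ tends to $-(n-2)\omega_{n-1}$ (this is \eqref{eq:lem3.3-2}). It then invokes $J(0^+)=0$ to pass to the limit.

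You instead integrate over the superlevel set $\{b\ge r\}$, which is a compact smooth domain inside $M\setminus\{p\}$. So there is a single boundary component and no limiting argument. Everything about the singularity at $p$ is absorbed into the global identity \eqref{eq:integral of G}, which is just the distributional equation $LG=(n-2)\omega_{n-1}\delta_p$ tested against the constant function $1$.

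Your route is shorter and needs no local information about $G$ near the pole. The paper's route derives the flux normalization directly from the pole asymptotics and stays on the sublevel side, where $J$ is defined. The two are equivalent, since \eqref{eq:lem3.3-2} is in effect a localized form of \eqref{eq:integral of G}. Your handling of the outward normal $-\nabla b/|\nabla b|$ and of the null level set is right.

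One small correction: your closing remark that the division is where $m<\pi$ is used is inaccurate. Already $0<r<m\le\pi$ gives $s(r)>0$ and $c(r)=\cos(r/2)>0$, so the argument does not actually need that standing assumption.
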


\begin{proof}
Since $G=s(b)^{2-n}$ and $s'=c$, we have $\nabla G=(2-n)s(b)^{1-n}c(b)\nabla b$. Hence, on a regular level set $\bls$ with unit normal $\nu=\nabla b/|\nabla b|$, we have
\begin{equation}\label{eq:lem3.3-1}
\int_{\bls}\partial_\nu G\darea=(2-n)s(r)^{1-n}c(r)S(r).
\end{equation}
Since $\nabla G\neq 0$ near $p$ from the local expansion of the Green function at the pole, every sufficiently small $\epsilon>0$ is a regular value of $b$, and 
\begin{equation}\label{eq:lem3.3-2}
\int_{\{b=\epsilon\}}\partial_\nu G\darea\rightarrow -(n-2)\omega_{n-1}\quad\text{as $\epsilon\rightarrow0$}
\end{equation}
by the normalization \eqref{eq:green-function-normalization}. Since $\Delta G=\frac{n(n-2)}{4}G$ on $\{\epsilon\le b\le r\}$, the divergence theorem implies
\begin{equation*}
\frac{n(n-2)}{4}\int_{\{\epsilon\le b\le r\}}G \dvol=\int_{\bls}\partial_\nu G\darea-\int_{\{b=\epsilon\}}\partial_\nu G\darea.
\end{equation*}
Letting $\epsilon\rightarrow0$ along regular values and using \eqref{eq:lem3.3-1}, \eqref{eq:lem3.3-2}, and $J(0^+)=0$, we obtain
\begin{equation*}
(n-2)J(r)=(2-n)s(r)^{1-n}c(r)S(r)+(n-2)\omega_{n-1},
\end{equation*}
which is \eqref{eq:3.8}.
\end{proof}

\subsection{Almost-Maximal Volume and Gap Theorem}
The key quantity in the proof of Theorem \ref{thm2} is the following weighted integral gap:
\begin{equation}\label{eq:weighted integral gap}
\eta:=\frac{n}{4}\int_M Gc(b)^2(1-|\nabla b|^2)\dvol.
\end{equation}

\begin{proposition}\label{prop3.4}
There exists a constant $C_n>0$ such that
\begin{equation}\label{eq:prop3.4}
1-\frac{\vol(M,g)}{\vol(\S^n)}\le C_n\left(\frac{\eta}{\omega_{n-1}}\right)^{\frac{n}{n+2}}.
\end{equation}
\end{proposition}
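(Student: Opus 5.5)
Everything is reduced to a first-order ODE in $r$ for $u(r):=\omega_{n-1}-J(r)$, driven by $H$. Two integral constraints then give the exponent $\frac{n}{n+2}$ through a single threshold $\kappa=(\eta/\omega_{n-1})^{1/(n+2)}$.

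\textbf{Step 1: ODE for $u$.} Combining \eqref{eq:3.4} with Lemma \ref{eq:lemma 3.3}, for a.e. $r\in(0,m)$ I get
\begin{equation*}
J'=\frac{n}{4}s\,c^{-1}(\omega_{n-1}-J)+\frac{n}{4}s^{2-n}H .
\end{equation*}
Since $\frac{n}{4}\frac{s}{c}=\frac n2\tan(r/2)=-n(\log c)'$, this becomes
\begin{equation*}
\bigl(c^{-n}u\bigr)'=-\frac n4\,c^{-n}s^{2-n}H .
\end{equation*}
Lemma \ref{lem:coarea} makes $J$ absolutely continuous, so I can integrate from $0$ using $u(0^+)=\omega_{n-1}$ and $c(0)=1$:
\begin{equation*}
c(r)^{-n}u(r)=\omega_{n-1}-\frac n4\int_0^r c^{-n}s^{2-n}H\,dt .
\end{equation*}

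\textbf{Step 2: smallness of $c(m)$.} Setting $r=m$ and using $J(m)=\omega_{n-1}$ gives
\begin{equation*}
\omega_{n-1}=\frac n4\int_0^m c^{-n}s^{2-n}H\,dr .
\end{equation*}
The second identity of \eqref{eq:3.3} with $\varphi=\frac n4 s^{2-n}c^2$ gives $\eta=\frac n4\int_0^m c^2s^{2-n}H\,dr$. Since $c$ is decreasing, $c\ge c(m)$ on $(0,m)$, hence $\omega_{n-1}\le c(m)^{-n-2}\eta$, that is,
\begin{equation*}
c(m)\le\kappa:=(\eta/\omega_{n-1})^{1/(n+2)}.
\end{equation*}

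\textbf{Step 3: volume comparison.} By \eqref{eq:3.4} and Lemma \ref{eq:lemma 3.3},
\begin{equation*}
\vol(M)=\int_0^m\bigl(s^{n-1}c^{-1}u+H\bigr)\,dr\ge\int_0^m s^{n-1}c^{n-1}\,(c^{-n}u)\,dr,
\end{equation*}
and $s^{n-1}c^{n-1}=\sin^{n-1}r$, so the round sphere has $\vol(\S^n)=\omega_{n-1}\int_0^\pi\sin^{n-1}$. Inserting Step 1 and applying Fubini yields
\begin{equation*}
\vol(\S^n)-\vol(M)\le\omega_{n-1}\int_m^\pi\sin^{n-1}t\,dt+\frac n4\int_0^m c^{-n}s^{2-n}H(r)\Bigl(\int_r^m\sin^{n-1}t\,dt\Bigr)dr .
\end{equation*}
The first term is $\le C_n\omega_{n-1}c(m)^n\le C_n\omega_{n-1}\kappa^n$, because $\sin t\le 2\cos(t/2)$ and $\int_r^\pi\cos^{n-1}(t/2)\,dt\le C_nc(r)^n$.

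\textbf{Step 4: the second term (main obstacle).} The difficulty is that $\eta$ only carries the weight $c^2$, while the inner integral $\int_r^m\sin^{n-1}\le C_n\min(1,c(r)^n)$ only cancels $c^{-n}$. The second term is therefore a priori controlled only by $\int s^{2-n}H$, which has no $c^2$ weight. I split at the same threshold $\kappa$.
\begin{itemize}
\item On $\{c(r)\ge\kappa\}$, I bound $c^{-n}\min(1,c^n)\le 1\le\kappa^{-2}c^2$. This piece is $\le C_n\kappa^{-2}\eta=C_n\omega_{n-1}\kappa^{n}$.
\item On $\{c(r)<\kappa\}$, I use $\int_r^m\sin^{n-1}\le C_nc(r)^n<C_n\kappa^n$. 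The remaining factor $\frac n4\int c^{-n}s^{2-n}H$ is at most $\omega_{n-1}$ by Step 2, so this piece is $\le C_n\omega_{n-1}\kappa^n$.
\end{itemize}
Summing the three contributions, dividing by $\vol(\S^n)$, and recalling $\kappa^n=(\eta/\omega_{n-1})^{n/(n+2)}$ gives \eqref{eq:prop3.4}.

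The rest is routine: justifying the a.e. calculus (Lemmas \ref{lem:critical set of b} and \ref{lem:coarea}) and the endpoint values of $J$.
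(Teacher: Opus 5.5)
Your proof is correct and follows essentially the paper's route. It uses the same integrating-factor ODE (your $c^{-n}u$ is the paper's $\omega_{n-1}-c^{-n}D$ with $D=J-\omega_{n-1}(1-c^n)$), the same bound $c(m)\le\kappa$ obtained from $J(m)=\omega_{n-1}$, and the same threshold $\kappa=(\eta/\omega_{n-1})^{1/(n+2)}$, which the paper calls $\beta$. The only difference is how the error term is bookkept: the paper proves the pointwise bound $D(r)\le \eta\, c(r)^{-2}$ and truncates the volume integral at $r_\ast$ with $c(r_\ast)=\kappa$, while you integrate up to $m$, apply Fubini, split the $H$-mass at $c=\kappa$, and control the tail by the total-mass identity $\frac{n}{4}\int_0^m c^{-n}s^{2-n}H\,dr=\omega_{n-1}$.
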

\begin{proof}
We first note that by \eqref{eq:integral of G}
\begin{equation*}
\eta \le \frac{n}{4}\int_M G\dvol=\omega_{n-1}.
\end{equation*}
By Lemma \ref{lem:coarea}, 
\begin{equation}\label{eq:prop3.4-1}
\eta=\frac{n}{4}\int_0^mc(t)^2s(t)^{2-n}H(t)\,dt
\end{equation}
and
\begin{equation}\label{eq:prop3.4-2}
\frac{n}{4}\int_0^ms^{2-n}H\,dt=\frac{n}{4}\int_M G(1-|\nabla b|^2)\dvol\le\omega_{n-1}.
\end{equation}
Define 
\begin{equation*}
D(r)=J(r)-\omega_{n-1}(1-c(r)^n),\quad r\in[0,m].
\end{equation*}
Then, $D$ is absolutely continuous, $D(0)=0$ and $D(m)=\omega_{n-1}c(m)^n$ since $J(m)=\omega_{n-1}$. Substituting \eqref{eq:3.8} into \eqref{eq:3.4}, we find that, for a.e. $r\in(0,m)$,
\begin{equation*}
D'(r)+\frac{n}{2}\tan(r/2)D(r)=\frac{n}{4}s(r)^{2-n}H(r)
\end{equation*}
and
\begin{equation*}
D(r)=\frac{n}{4}c(r)^n\int_0^rc(t)^{-n}s(t)^{2-n}H(t)\,dt\ge0.
\end{equation*}
Since $c$ is decreasing,
\begin{equation*}
c(t)^{-n}=c(t)^2c(t)^{-(n+2)}\le c(t)^2c(r)^{-(n+2)}
\end{equation*}
for $0<t\le r$, and hence, by \eqref{eq:prop3.4-1} and \eqref{eq:prop3.4-2}, 
\begin{equation}\label{eq:3.13}
0\le D(r)\le \eta c(r)^{-2},\quad r\in(0,m].
\end{equation}
Taking $r=m$ in \eqref{eq:3.13} gives
\begin{equation}\label{eq:3.14}
\omega_{n-1}c(m)^{n+2}\le \eta.
\end{equation}
In particular, $\eta>0$, since $m<\pi$ and hence $c(m)>0$.

Next, by Lemma \ref{lem:coarea} and Lemma \ref{eq:lemma 3.3}, we have, for every $r\in(0,m)$,
\begin{align*}
\vol(M)&\ge\vol(\bsls)=\int_0^r(S+H)\,dt\\
&\ge\int_0^rs(t)^{n-1}c(t)^{-1}(\omega_{n-1}-J(t))\,dt\\
&=\int_0^rs(t)^{n-1}c(t)^{-1}(\omega_{n-1}c(t)^n-D(t))\,dt\\
&=\underbrace{\omega_{n-1}\int_0^rs(t)^{n-1}c(t)^{n-1}\,dt}_{:=A}-\underbrace{\int_0^rs(t)^{n-1}c(t)^{-1}D(t)\,dt}_{:=B}.
\end{align*}
The first term $A$ can be computed explicitly:
\begin{equation}\label{eq3.15}
A=\omega_{n-1}\int_0^rs(t)^{n-1}c(t)^{n-1}\,dt=\omega_{n-1}\int_0^r\sin^{n-1}t\,dt=\vol\left(B_r^{\S^n}\right),
\end{equation}
where $\vol\left(B_r^{\S^n}\right)$ denotes the volume of the geodesic ball of radius $r$ in $\S^n$. The second term $B$ can be controlled by \eqref{eq:3.13}: 
\begin{equation}\label{eq3.16}
\begin{aligned}
B&=\int_0^rs(t)^{n-1}c(t)^{-1}D(t)\,dt\le \eta\int_0^rs(t)^{n-1}c(t)^{-3}\,dt\\
&\le 2^{n-2}\eta\int_0^rs(t)c(t)^{-3}\,dt=2^{n-1}\eta (c(r)^{-2}-1)\le 2^{n-1}\eta c(r)^{-2}.
\end{aligned}
\end{equation}
On the other hand, we note 
\begin{equation}\label{eq3.17}
\vol\left(\S^n\setminus B_r^{\S^n}\right)=\omega_{n-1}\int_r^{\pi}\sin^{n-1}t\,dt\le\frac{2^{n}\omega_{n-1}}{n}c(r)^n.
\end{equation}
Combining \eqref{eq3.15}, \eqref{eq3.16} and \eqref{eq3.17} gives 
\begin{equation}\label{eq3.18}
\vol(\S^n)-\vol(M)\le \frac{2^n\omega_{n-1}}{n}c(r)^n+2^{n-1}\eta c(r)^{-2},\quad r\in(0,m].
\end{equation}

Finally, we set
\begin{equation*}
    \beta=\left(\frac{\eta}{\omega_{n-1}}\right)^{\frac{1}{n+2}}\in(0,1].
\end{equation*}
If $\beta\ge\frac{1}{2}$, then \eqref{eq:prop3.4} holds with $C_n=2^n$ since its left-hand side is at most $1\le 2^n\beta^n$. If $\beta<\frac{1}{2}$, then $c(m)\le \beta<1$ by \eqref{eq:3.14}, and by continuity, there exists $r_\ast\in(0,m]$ with $c(r_\ast)=\beta$. Then \eqref{eq3.18} at $r=r_\ast$ gives 
\begin{equation*}
\vol(\S^n)-\vol(M)\le\left(\frac{2^n}{n}+2^{n-1}\right)\omega_{n-1}\left(\frac{\eta}{\omega_{n-1}}\right)^{\frac{n}{n+2}}
\end{equation*}
since $\eta\beta^{-2}=\omega_{n-1}\beta^n$. It completes the proof.
\end{proof}

We now prove Theorem \ref{thm2}.

\begin{proof}[Proof of Theorem \ref{thm2}]
Let $h$ be the unique solution to
\begin{equation}\label{eq3.19}
Lh=nc(b)^2(1-|\nabla b|^2)\quad\text{on $M$}.
\end{equation}
Note that $h\in C^1(M)$, and $h\ge0$ if we test \eqref{eq3.19} with $\min\{h,0\}$.

By the normalization \eqref{eq:green-function-normalization} and the symmetry of the Green function,
\begin{equation}\label{eq3.20}
h(p)=\frac{1}{(n-2)\omega_{n-1}}\int_M Gnc(b)^2(1-|\nabla b|^2)\dvol=\frac{4\eta}{(n-2)\omega_{n-1}}.
\end{equation}
Integrating \eqref{eq3.19} over $M$, we have
\begin{equation}\label{eq3.21}
\begin{aligned}
\fint_Mh\dvol&=\frac{4}{n(n-2)}\fint_M nc(b)^2(1-|\nabla b|^2)\dvol\\
&\le\frac{4}{n-2}\fint_M (1-|\nabla b|^2)\dvol=\frac{4}{n-2}\E_p(M,g).
\end{aligned}
\end{equation}
Since $h\ge0 $, we obtain the pointwise bound, together with \eqref{eq3.19},
\begin{equation*}
\Delta h=\frac{n(n-2)}{4}h-nc(b)^2(1-|\nabla b|^2)\ge -nc(b)^2(1-|\nabla b|^2).
\end{equation*}
The Neumann-type maximum principle due to Wei--Ye \cite[Theorem C]{WeiYe2007} gives a constant $C_{n,q}>0$ for each $q>n/2$ such that
\begin{equation*}
\sup_Mh\le\fint_Mh\dvol+C_{n,q}\left(\fint_M\left(nc(b)^2(1-|\nabla b|^2)\right)^q\right)^{1/q}.
\end{equation*}
Since $0\le c(b)^2(1-|\nabla b|^2)\le 1$, for each $q\ge 1$ the integrand of the last term is bounded by $n^q(1-|\nabla b|^2)$. Consequently the second term is bounded by $C_{n,q}n\E_p(M,g)^{1/q}$. Combining this with \eqref{eq3.20} and \eqref{eq3.21}, we obtain
\begin{equation}\label{eq3.22}
\begin{aligned}
\eta&=\frac{(n-2)\omega_{n-1}}{4}h(p)\le\frac{(n-2)\omega_{n-1}}{4}\sup_M h\\
&\le \omega_{n-1}\left(\E_p(M,g)+\frac{n(n-2)}{4}C_{n,q}\E_p(M,g)^{1/q}\right)\\
&\le C_{n,q}'\omega_{n-1}\E_p(M,g)^{1/q}
\end{aligned}
\end{equation}
since $0\le\E_p(M,g)\le 1$, where $C_{n,q}'=1+n(n-2)C_{n,q}/4$.

Finally, for given $\gamma<\frac{2}{n+2}$, if we set $q=\frac{n}{\gamma(n+2)}$ so that $q>\frac{n}{2}$, then Proposition \ref{prop3.4} and \eqref{eq3.22} give
\begin{equation*}
1-\frac{\vol(M,g)}{\vol(\S^n)}\le C_n\left(\frac{\eta}{\omega_{n-1}}\right)^{\frac{n}{n+2}}\le C_{n,\gamma}\E_p(M,g)^{\frac{n}{q(n+2)}}=C_{n,\gamma}\E_p(M,g)^{\gamma},
\end{equation*}
as desired. This completes the proof.
\end{proof}

We now prove the two corollaries stated in the introduction.

\begin{proof}[Proof of Corollary \ref{cor3}]
Fix $0<\gamma<\frac{2}{n+2}$. By Theorem \ref{thm2}, 
\begin{equation*}
\vol(\S^n)-\vol(M,g)\le C_{n,\gamma}\vol(\S^n)\E_p(M,g)^{\gamma}.
\end{equation*}
Hence, if $\ve(n)$ is sufficiently small, the volume of $(M,g)$ is almost maximal, and Aubry's volume pinching theorem \cite{Aubry2005}, applied with $k=1$, shows that $\pi_1(M)$ is trivial, that $M$ is diffeomorphic to $\S^n$, and that
\begin{equation*}
d_{\mathrm{GH}}((M,g),(\S^n,g_{\S^n}))\le C(n)(\vol(\S^n)-\vol(M,g))^{\beta(n)},
\end{equation*}
where $C(n)>0$ and $\beta(n)>0$ are the constants of \cite{Aubry2005}. Combining the two estimates proves the corollary with $\alpha_n=\gamma\beta(n)$. 
\end{proof}

\begin{proof}[Proof of Corollary \ref{thm3}]
By Theorem \ref{thm2}, if $\E_p(M,g)\le \ve(n)$ with $\ve(n)$ sufficiently small, then $\vol(M,g)$ is arbitrarily close to $\vol(\S^n)$. Since we assumed that $(M,g)$ is Einstein with $\ric=(n-1)g$, the sphere theorem of Honda--Mondello \cite[Theorem C]{HondaMondello} implies that $(M,g)$ is isometric to the round sphere $(\S^n,g_{\S^n})$. 
\end{proof}

\begin{remark}\label{rmk3.5}
By \cite[Proposition 6.1]{Man25} with notations used there, 
\begin{equation*}
\eta=\omega_{n-1}-A(m)=\omega_{n-1}-\frac{n(n+2)}{32}\int_{M}G^{\frac{n-4}{n-2}}\dvol.
\end{equation*}
When $n=4$, the exponent $(n-4)/(n-2)$ vanishes so that
\begin{equation}\label{eq:eta}
\eta=\omega_3\left(1-\frac{\vol(M,g)}{\vol(\S^4)}\right).
\end{equation}
Thus, Proposition \ref{prop3.4} holds with exponent 1 in place of $\frac{n}{n+2}$ when $n=4$, and Theorem \ref{thm2} holds with $\gamma<\frac{1}{2}$.

The identity \eqref{eq:eta} also agrees with the formulation of Gursky--Malchiodi \cite{GurskyMalchiodi}. When $n=4$, let $\wh{g}=G^2g$ be the conformal blow-up on $M\setminus\{p\}$ (with pole at $p$) and let $F=G^{-1}(1+|\hn G|_{\hg}^2)-4$ be the function introduced in \cite[(3.1)]{GurskyMalchiodi}. Substituting $G=s(b)^{-2}$ gives the pointwise identity $-F=4c(b)^2(1-|\nabla b|^2)$. Hence the estimate \eqref{eq:manea} is equivalent to the estimate $|\hn G|_{\hg}^2\le 4G-1$ of \cite[Proposition 3.3]{GurskyMalchiodi} in dimension four, and the weighted integral gap $\eta$ becomes 
\begin{equation*}
\eta=-\frac{1}{4}\int_M FG\dvol.
\end{equation*}
So the weight $c(b)^2$ in \eqref{eq:weighted integral gap} is natural in light of \cite{GurskyMalchiodi}. Moreover, for Einstein metrics, combining the mass formula \cite[Theorem 1.1]{GurskyMalchiodi} with the identity above, we obtain that $\eta\le 2\pi^2 m(\hg)$, where $m(\hg)$ denotes the ADM mass of the blow-up.
\end{remark}

\section{Another Proof of Four-Dimensional Gap Theorem}
In this section we prove Theorem \ref{thm4}. We verify that the two hypotheses of Theorem \ref{thm1} are satisfied when the normalized integral gap is small, thereby providing another proof of Corollary \ref{thm3} in dimension four without using the sphere theorem of Honda--Mondello \cite{HondaMondello}.

\begin{proof}[Proof of Theorem \ref{thm4}]
To the contrary, suppose that the conclusion fails for some fixed $0<\delta<2$ and $\zeta>0$. Then there exists a sequence $(M_i,g_i)$ of closed connected Einstein 4-manifolds with $\ric_{g_i}=3g_i$, together with points $p_i,q_i\in M_i$, such that
\begin{equation*}
q_i\in\underset{x\in M_i}{\mathrm{argmax}}\,b_{p_i}(x)\quad\text{and}\quad \E_{p_i}(M_i,g_i)\rightarrow0,
\end{equation*}
but, for every $i$, at least one of the following inequalities fails: 
\begin{equation}\label{eq:thm1.4.1}
\inf_{\hm_i\setminus\{p_i\}}|\hn\rho_{i}|_{\hg_i}\ge\delta,\qquad \sup_{\hm_i}\rho_i^2|\hw_i|_{\hg_i}<\zeta,
\end{equation}
where
\begin{equation*}
\hg_i:=G_{q_i}^2 g_i\quad\text{on $M_i\setminus\{q_i\}$}\quad\text{and}\quad \rho_i:=\left(\frac{G_{q_i}}{G_{p_i}}\right)^{1/2}.
\end{equation*}

By Theorem \ref{thm2} and Bishop--Gromov comparison,
\begin{equation*}
\vol(M_i,g_i)\rightarrow\vol(\S^4).
\end{equation*}
Then Colding's almost-maximal-volume theorem \cite{Colding1996Shape} implies that $(M_i,g_i)$ converges to $\S^4$ in the Gromov-Hausdorff sense. Since $(M_i,g_i)$ are Einstein, up to taking a further subsequence (which we again index by $i$), there exist diffeomorphisms $\Phi_i:\S^4\rightarrow M_i$ such that $\Phi_i^\ast g_i\rightarrow g_{\S^4}$ smoothly, $\Phi_i^{-1}(p_i)\rightarrow p_\infty$, and $\Phi_i^{-1}(q_i)\rightarrow q_\infty$; see for instance \cite{Anderson1990}. The Green functions also converge smoothly on compact sets away from their poles, while their pole expansions are uniform under the convergence; see \cite{LeeParker1987}. If $p_\infty^\ast$ denotes the antipodal point of $p_\infty$, then we have
\begin{equation*}
b_{p_i}(\Phi_i(p_\infty^\ast))\rightarrow\pi.
\end{equation*}
Since $b_{p_i}\le\pi$ and $q_i$ is a maximum point of $b_{p_i}$, we conclude that $b_{p_i}(q_i)\rightarrow\pi$. Since $b_{p_i}$ is 1-Lipschitz and $b_{p_i}(p_i)=0$, we have
\begin{equation*}
b_{p_i}(q_i)\le d_{g_i}(p_i,q_i)\le\pi
\end{equation*}
and $d_{g_i}(p_i,q_i)\rightarrow\pi$. Therefore $q_\infty=p_\infty^\ast$. 

First we consider the Weyl term. The conformal transformation law and the definition of $\rho_i$ yield
\begin{equation*}
\rho_i^2|\wh{W}_i|_{\wh{g}_i}=\frac{|W_i|_{g_i}}{G_{p_i}G_{q_i}}.
\end{equation*}
Note that in dimension four, we have $G_a=(2\sin(b_a/2))^{-2}\ge1/4$ for each $a\in M$. Thus
\begin{equation}
\sup_{\hm_i}\rho_i^2|\hw_i|_{\hg_i}\le16\norm{W_{g_i}}_{L^\infty(M_i,g_i)}\rightarrow0,
\end{equation}
since the limit metric is round and hence has vanishing Weyl tensor. In particular, the second inequality in \eqref{eq:thm1.4.1} holds for all sufficiently large $i$.

It remains to prove a uniform lower bound for $|\hn\rho_i|_{\hg_i}$. On the limiting sphere, denoting $r=d_{\S^4}(p_\infty,\cdot)$, we have
\begin{equation*}
G_{p_\infty}=(2\sin(r/2))^{-2},\quad G_{q_\infty}=(2\cos(r/2))^{-2},\quad \rho_\infty=\tan(r/2)
\end{equation*}
and so
\begin{equation}\label{eq:thm1.4.2}
|\hn\rho_\infty|_{\hg_\infty}=G_{q_\infty}^{-1}|\nabla\rho_\infty|_{g_{\S^4}}=2 \quad\text{on $\S^4\setminus\{p_\infty,q_\infty\}$}.
\end{equation}

Set 
\begin{equation*}
    \gamma_i:=G_{q_i}(p_i)=G_{p_i}(q_i).
\end{equation*}
By smooth convergence of Green functions and noting $q_\infty=p_\infty^\ast$, we have
\begin{equation}\label{eq:thm1.4.3}
\gamma_i\rightarrow G_{q_\infty}(p_\infty)=\frac{1}{4}.
\end{equation}
Let $s_i=d_{g_i}(p_i,\cdot)$. The uniform pole expansions give, near $p_i$,
\begin{equation*}
G_{p_i}=s_i^{-2}(1+O_1(s_i^2)),\qquad G_{q_i}=\gamma_i+O_1(s_i),
\end{equation*}
where the constants in the $O_1$ terms are independent of $i$ for all sufficiently large $i$. Hence
\begin{equation*}
\rho_i=\gamma_i^{1/2}s_i+O_1(s_i^2),\qquad G_{q_i}^{-1}=\gamma_i^{-1}+O(s_i),
\end{equation*}
and 
\begin{equation}\label{eq:thm1.4.4}
|\hn\rho_i|_{\hg_i}=G_{q_i}^{-1}|\nabla\rho_i|_{g_i}=\gamma_i^{-1/2}+O(s_i)
\end{equation}
near $p_i$, uniformly in $i$.

Similarly, if $t_i=d_{g_i}(q_i,\cdot)$, then near $q_i$,
\begin{equation*}
G_{q_i}=t_i^{-2}(1+O_1(t_i^2)),\qquad G_{p_i}=\gamma_i+O_1(t_i),
\end{equation*}
so that
\begin{equation*}
\rho_i=\gamma_i^{-1/2}t_i^{-1}+O_1(1),\qquad G_{q_i}^{-1}=t_i^2(1+O(t_i^2)).
\end{equation*}
It then follows that
\begin{equation}\label{eq:thm1.4.5}
|\hn\rho_i|_{\hg_i}=\gamma_i^{-1/2}+O(t_i)
\end{equation}
near $q_i$, again uniformly in $i$.

By \eqref{eq:thm1.4.3}, \eqref{eq:thm1.4.4} and \eqref{eq:thm1.4.5}, for $r_0>0$ sufficiently small and $i$ sufficiently large, we have
\begin{equation}\label{eq:thm1.4.7}
|\hn\rho_i|_{\hg_i}>\delta\quad\text{on } (B_{r_0}^{g_i}(p_i)\setminus\{p_i\})\cup(B_{r_0}^{g_i}(q_i)\setminus\{q_i\}). 
\end{equation}
On the compact complement of these two balls, the Green functions and their first derivatives converge smoothly to the Green function on $\S^4$. By \eqref{eq:thm1.4.2},
\begin{equation}\label{eq:thm1.4.8}
|\hn\rho_i|_{\hg_i}\rightarrow 2\quad\text{uniformly on } M_i\setminus(B_{r_0}^{g_i}(p_i)\cup B_{r_0}^{g_i}(q_i)).
\end{equation}
Since $\delta<2$, \eqref{eq:thm1.4.7} and \eqref{eq:thm1.4.8} show that the first inequality in \eqref{eq:thm1.4.1} also holds for all sufficiently large $i$, which is a contradiction. This proves Theorem \ref{thm4}. 
\end{proof}

Finally, we remark that taking any $0<\delta<2$ and then choosing $\zeta<c\delta^2$, Theorems \ref{thm1} and \ref{thm4} together yield another proof of Corollary \ref{thm3} in dimension 4.

\bibliographystyle{alpha}
\bibliography{reference2}

@article{Col12,
    AUTHOR = {Colding, Tobias Holck},
     TITLE = {New monotonicity formulas for {R}icci curvature and applications. {I}},
   JOURNAL = {Acta Math.},
    VOLUME = {209},
      YEAR = {2012},
    NUMBER = {2},
     PAGES = {229--263},
}

@misc{Man25,
    AUTHOR = {Manea, Cosmin},
     TITLE = {Sharp gradient estimates and monotonicity in positive {R}icci curvature},
      NOTE = {Preprint, arXiv:2512.05467},
      YEAR = {2025},
}

@article{LP25,
    AUTHOR = {Lee, Sanghoon and Park, Jiewon},
     TITLE = {Rigidity of the gradient estimate for {E}instein manifolds},
   JOURNAL = {J. Geom. Anal.},
    VOLUME = {36},
      YEAR = {2026},
    NUMBER = {10},
     PAGES = {316},
}

@article{Wu2017,
    AUTHOR = {Wu, Peng},
     TITLE = {A {W}eitzenb\"{o}ck formula for canonical metrics on four-manifolds},
   JOURNAL = {Trans. Amer. Math. Soc.},
    VOLUME = {369},
      YEAR = {2017},
    NUMBER = {2},
     PAGES = {1079--1096},
}

@article{LeeParker1987,
    AUTHOR = {Lee, John M. and Parker, Thomas H.},
     TITLE = {The {Y}amabe problem},
   JOURNAL = {Bull. Amer. Math. Soc. (N.S.)},
    VOLUME = {17},
      YEAR = {1987},
    NUMBER = {1},
     PAGES = {37--91},
}

@article{NaberValtorta2017,
    AUTHOR = {Naber, Aaron and Valtorta, Daniele},
     TITLE = {Volume estimates on the critical sets of solutions to elliptic {PDE}s},
   JOURNAL = {Comm. Pure Appl. Math.},
    VOLUME = {70},
      YEAR = {2017},
    NUMBER = {10},
     PAGES = {1835--1897},
}

@article{WeiYe2007,
    AUTHOR = {Wei, Guofang and Ye, Rugang},
     TITLE = {A {N}eumann type maximum principle for the {L}aplace operator on compact {R}iemannian manifolds},
   JOURNAL = {J. Geom. Anal.},
    VOLUME = {19},
      YEAR = {2009},
     PAGES = {719--736},
}

@article{HondaMondello,
    AUTHOR = {Honda, Shouhei and Mondello, Ilaria},
     TITLE = {Sphere theorems for {RCD} and stratified spaces},
   JOURNAL = {Ann. Sc. Norm. Super. Pisa Cl. Sci. (5)},
    VOLUME = {22},
      YEAR = {2021},
    NUMBER = {2},
     PAGES = {903--923},
}

@article{CheegerColding1997,
    AUTHOR = {Cheeger, Jeff and Colding, Tobias H.},
     TITLE = {On the structure of spaces with {R}icci curvature bounded below. {I}},
   JOURNAL = {J. Differential Geom.},
    VOLUME = {46},
      YEAR = {1997},
    NUMBER = {3},
     PAGES = {406--480},
}

@article{Aubry2005,
    AUTHOR = {Aubry, Erwann},
     TITLE = {Pincement sur le spectre et le volume en courbure de {R}icci positive},
   JOURNAL = {Ann. Sci. \'{E}cole Norm. Sup. (4)},
    VOLUME = {38},
      YEAR = {2005},
    NUMBER = {3},
     PAGES = {387--405},
}

@article{Colding1996Shape,
  author  = {Colding, Tobias H.},
  title   = {Shape of manifolds with positive {R}icci curvature},
  journal = {Invent. Math.},
  volume  = {124},
  year    = {1996},
  number  = {1-3},
  pages   = {175--191},
  doi     = {10.1007/s002220050049}
}

@article{Anderson1990,
  author  = {Anderson, Michael T.},
  title   = {Convergence and rigidity of manifolds under {R}icci curvature bounds},
  journal = {Invent. Math.},
  volume  = {102},
  year    = {1990},
  number  = {2},
  pages   = {429--445},
  doi     = {10.1007/BF01233434}
}

@article{HondaMondino,
    author = {Honda, Shouhei and Mondino, Andrea},
    title = {Gap phenomena under curvature restrictions},
    journal = {Indag. Math. (N.S.)},
    volume = {37},
    number = {3},
    year = {2026},
    pages = {744--763},
    doi = {10.1016/j.indag.2025.03.008}
}

@book{Besse,
    author = {Besse, Arthur L.},
    title = {Einstein manifolds},
    series = {Ergebnisse der Mathematik und ihrer Grenzgebiete (3)},
    volume = {10},
    publisher = {Springer-Verlag},
    address = {Berlin},
    year = {1987}
}

@article{HondaPeng,
    author = {Honda, Shouhei and Peng, Yuanlin},
    title = {Sharp gradient estimate, rigidity and almost rigidity of {G}reen functions on non-parabolic {RCD}$(0,{N})$ spaces},
    journal = {Proc. Roy. Soc. Edinburgh Sect. A},
    volume = {155},
    number = {4},
    year = {2025},
    pages = {1267--1320}
}

@misc{GurskyMalchiodi,
  author = {Gursky, Matthew J. and Malchiodi, Andrea},
  title  = {Mass and volume of four-dimensional {E}instein metrics},
  year   = {2025},
  note   = {Preprint, arXiv:2512.07257},
}
\end{document}